\newcommand{\vp}{\varphi}
  \newcommand{\M}{{\mathcal M}}
    \newcommand{\W}{{\mathcal W}}
   \newcommand{\U}{{\mathcal U}}
   \newcommand{\V}{{\mathcal V}}
      \renewcommand{\H}{{\mathbb H}}
            \newcommand{\N}{{\mathbb N}}
             \renewcommand{\S}{{\mathbb S}}
      \newcommand{\HH}{{\mathcal H}}
\newcommand{\br}{\mathbb{R}}
\newcommand{\te}{\theta}
\newcommand{\vte}{\vartheta}
\renewcommand{\a}{\alpha}
\newcommand{\g}{\gamma}
\newcommand{\s}{\sigma}
\newcommand{\hh}{h}
\newcommand{\vol}{ {\rm vol}}
\newcommand{\e}{\varepsilon}
\renewcommand{\r}{\rho}
\renewcommand{\(}{\left(}
\renewcommand{\)}{\right)}
\renewcommand{\[}{\left[}
\renewcommand{\]}{\right]}
\renewcommand{\>}{\rangle}
\newcommand{\ra}{\rightarrow}
\newcommand{\supp}{{\rm supp}}
\newcommand{\dist}{{\rm dist}}
\newcommand{\one}{\mathbf{1}}
\newtheorem{thm}{Theorem}
\newtheorem{lem}[thm]{Lemma}
\newtheorem{cor}[thm]{Corollary}
\newtheorem{defi}[thm]{Definition}
\def\be{\begin{equation}}
\def\ee{\end{equation}}
\def\bea{\begin{eqnarray}}
\def\eea{\end{eqnarray}}
\numberwithin{thm}{section}
\numberwithin{equation}{section}
\title{Asymptotic quantization for probability measures on Riemannian manifolds}
\author{
  Mikaela Iacobelli\
   \thanks{University of Roma La Sapienza, Dipartimento Guido Castelnuovo, Piazzale Aldo Moro 5, 00185 Roma, ITALY. Email: \textsf{iacobelli@mat.uniroma1.it}}
   \thanks{Ecole Polytechnique, Centre de math\'ematiques Laurent Schwartz, 91128 Palaiseau Cedex, FRANCE.}
   }
\begin{document}

\maketitle

%\section{Motivations}
%\begin{itemize}
%\item vedere il caso dei gruppi localmente compatti con la misura di Haar;
%\item 
%\end{itemize}
%

\begin{abstract}
In this paper we study the quantization problem for probability measures on Riemannian manifolds. Under a suitable assumption on the growth at infinity of the measure we find asymptotic estimates for the quantization error, generalizing the results on $\mathbb{R}^d.$ Our growth assumption depends on the curvature of the manifold  and reduces, in the flat case, to a moment condition. We also build an example showing that our hypothesis is sharp.

\medskip
{\bf R\'esum\'e:} Dans ce travail nous \'etudions le probl\`eme de quantification des mesures sur les vari\'et\'es Riemanniennes. Sous des hypoth\`eses  convenables sur la croissance de la mesure \`a l'infini, nous obtenons
des estim\'ees asymptotiques pour l'erreur de quantification. Ceci g\'en\'eralise les r\'esultats connus dans $\mathbb{R}^d$.
Notre hypoth\`ese de croissance d\'epend de la courbure de la vari\'et\'e et,  dans le cas plat, correspond \`a un contr\^ole sur les moments. Nous construisons aussi un exemple  pour montrer que notre hypoth\`ese est n\'ecessaire.
\end{abstract}

%Dans ce travail, nous \'etudions le probl\`eme de quantification des mesures sur les vari\'et\'es Riemanniennes. Sous des hypoth\`eses convenables sur la croissance de la mesure \`a l'infini, nous obtenons des estimations asymptotiques pour l'erreur de quantification. Ceci g\'en\'eralise les r\'esultats connus dans $\mathbb{R}^d$.
%Notre hypoth\`ese de croissance d\'epend de la courbure de la vari\'et\'e et,  dans le cas plat, correspond \`a un contr\^ole sur les moments. Nous construisons aussi un exemple  pour montrer que notre hypoth\`ese est n\'ecessaire.
%
\tableofcontents

\section{Introduction}

  The problem of quantization of a $d$-dimensional probability distribution deals with constructive methods to find atomic probability measures supported on a finite number of points, which best approximate a given diffuse probability measure. The quality of this approximation is usually measured in terms of the Wasserstein metric,
  and up to now this problem has been studied in the flat case and on compact manifolds.
 
 The quantization problem arises in several contexts and has  applications in signal compression, pattern recognition, speech recognition, stochastic processes, numerical integration, optimal location of service centers, and kinetic theory. For a detailed exposition and a complete list of references, we refer to the monograph \cite{GL} and references therein.
In this paper we study it for probability measures on general Riemannian manifolds. Apart from its own interest, this has several natural applications. 

To mention one, in order to find a good approximation of a convex body by polyhedra one may look for the best approximation of the curvature measure of the convex body by discrete measures \cite{B}. 

To give another natural motivation, let us present the so-called \emph{location problem}. If we want to plan the location of a certain number of grocery stores to meet the demands of the population in a city, we need to chose the optimal location and size of the stores with respect to the distribution of the population.
% the variables which arise in the optimization problem assume the following significance: f denotes the density of the student population; the measure ? represents the location {xi} and the capacity {ci} of the various schools; the quantity Wp(f,?) measures the cost of transporting students to their schools. The government would like to plan the schools in such a way that the cost of transportation is minimized while respecting budget constraints for the construction of schools. The function H calculates the construction costs for the distribution ? of the schools. 
%
%
%
%QUESTO L’HO BUTTATO GIU’ ISPIRANDOMI A UN LAVORO DI MC CANN, OVVIAMENTE LO PUOI RIFORMULARE MEGLIO (E’ SOLO UNA BOZZA)
The classical case on $\br^d$ corresponds to the situation of a city on a flat land. 
Now consider the possibility that the geographical region, instead of being flat, is situated either at the bottom of a valley, or at a pass in the mountains. Then the Wasserstein distance reflects this geography, by depending on the distance $d$ as measured along a spherical cap in the case of the valley or along a piece of a saddle in the case of the mountain pass. It follows by our results that for a city in a valley or on a mountain top the optimal location problem converge as in the flat case, while for a city located on a pass in the mountains the effect of negative curvature badly influences the quality of the approximation. Hence, our results display how geometry and geography can affect the optimal location problem.
\\

 We now introduce the setting of the problem. Let $(\M, g)$ be a complete Riemannian manifold, and fixed $r\ge 1$, consider $\mu$ a probability measure on $ \M$.
 Given $N$ points $x^{1}, \ldots, x^{N} \in \M,$ one wants to find the  best approximation of $\mu,$ in the Wasserstein distance $W_r$, by a convex combination of Dirac masses centered at $x^{1}, \ldots, x^{N}.$ Hence one minimizes
$$
\inf \bigg\{ W_r\bigg(\sum_im_i \delta_{x^i}, \mu\bigg)^r\,:\, m_1, \ldots, m_N\ge0, \  \sum_im_i=1
\bigg\},
$$
with
$$
W_r(\nu_1, \nu_2):=\inf\bigg\{
\biggl(\int_{\M\times\M}d(x,y)^rd\gamma(x,y)\biggr)^{1/r}\,:\,
(\pi_1)_\#\gamma=\nu_1,\  (\pi_2)_\#\gamma=\nu_2\bigg\},
$$
where $\gamma$ varies among all probability measures on $\M \times \M$, $\pi_i: \M \times \M \to \M$ ($i=1,2$) denotes the canonical projection onto the $i$-th factor, and $d(x,y)$ denotes the Riemannian distance.
The best choice of the masses $m_i$ is explicit and can be expressed in terms of the so-called {\it Voronoi cells} \cite[Chapter 1.4]{GL}. Also, as shown for instance in \cite[Chapter 1, Lemmas 3.1\ and 3.4]{GL},
the following identity holds:
$$
\inf \bigg\{ W_r\bigg(\sum_im_i \delta_{x^i}, \mu\bigg)^r\,:\, m_1, \ldots, m_N\ge0, \  \sum_im_i=1
\bigg\}
=F_{N,r} (x^{1}, \ldots, x^{N}) ,
$$
where
 $$
F_{N,r} (x^{1}, \ldots, x^{N}) := \int_{\M} \underset{1\le i \le N}{\mbox{min}} d(x^i,y)^r\,d\mu(y).
$$
Hence, the main question becomes:  Where are the ``optimal points'' $(x^{1}, \ldots, x^{N})$ located?
To answer to this question, at least in the limit as $N\to \infty$, let us first introduce some definitions.
\begin{defi}
Let $\mu$ be a probability a probability measure on $\M$, $N\in \mathbb{N}$ and $r\ge1$. Then, we define the \emph{$N$-th quantization error of order $r$}, $V_{N,r}(\mu)$ as follows:
\be
\label{eq:V_{N,r}}
V_{N,r}(\mu):= \underset{\alpha \subset \M : |\alpha|\le N}{\inf}  \int_{\M} \underset{a\in \alpha}{\min} \,d(a,y)^r\,d\mu(y),
\ee
where $|\alpha|$ denotes the cardinality of a set $\alpha.$

Let us notice that, being the functional $F_{N,r}$ decreasing with respect to the number of points $N,$ an equivalent definition of $V_{N,r}$ is:
$$
V_{N,r}(\mu):=\underset{x^1, \ldots, x^N \in \M}{\inf} F_{N,r}(x^1, \ldots, x^N).
$$
\end{defi}

Let us observe that the above definitions make sense for general positive measures with finite mass. In the sequel we will sometimes consider this class of measures in order to avoid renormalization constants.

A quantity that plays an important role in our result is the following:
\begin{defi} \label{def:Qrd}Let $dx$ be the Lebesgue measure and  $\chi_{[0,1]^d}$ the characteristic function of the unit cube ${[0,1]^d}.$ We set 
$$
Q_r\([0,1]^d\):= \underset{N\ge1}{\inf} N^{r/d}V_{N,r}\(\chi_{[0,1]^d} dx\).
$$
\end{defi}
As proved in \cite[Theorem 6.2]{GL}, $Q_r\([0,1]^d\)$ is a positive constant.
The following result describe the asymptotic distribution of the minimizing configuration in $\mathbb{R}^d$,
answering to our question in the flat case
(see \cite{BW} and \cite[Chapter 2, Theorems 6.2 and 7.5]{GL}): 

\begin{thm}
\label{thm: R^d}
Let $\mu=h\,dx+\mu^s$ be a probability measure on $\mathbb{R}^d$, where $\mu^s$ denotes the singular part of $\mu$. Assume that $\mu$ satisfies
\be
\label{eq:moment}
\int_{\mathbb{R}^d}|x|^{r+\delta}\,d\mu(x)< \infty.
\ee
Then 
\be\label{close1}
\underset{N\to\infty}{\lim}N^{r/d} V_{N,r}(\mu)=Q_r\([0,1]^d\)\,\biggl(\int_{\mathbb R^d}h^{d/(d+r)}\,dx\biggr)^{(d+r)/d}.
\ee
In addition, if $\mu^s\equiv 0$ and
$x^{1}, \ldots, x^{N}$ minimize the functional $F_{N,r}: (\mathbb{R}^d)^N \ra \mathbb{R}^+$, then 
\be\label{close}
\frac{1}{N}\sum_{i=1}^N \delta_{x^i} \rightharpoonup \frac{h^{d/{d+r}}}{\int_{\mathbb{R}^d} h^{d/{d+r}}(y)dy}\,dx \qquad \text{as $N \to \infty.$}
\ee
  \end{thm}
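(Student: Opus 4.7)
The plan is to establish matching upper and lower asymptotic bounds for $N^{r/d}V_{N,r}(\mu)$. The first observation is the scaling identity $V_{N,r}(c\,\chi_Q\,dx)=c\,|Q|^{1+r/d}V_{N,r}(\chi_{[0,1]^d}dx)$ valid for any cube $Q$, which together with Definition~\ref{def:Qrd} and a subadditivity argument (split $[0,1]^d$ into $k^d$ congruent subcubes, allocate $n$ points per subcube, and use scaling) yields
\begin{equation*}
\lim_{N\to\infty}N^{r/d}V_{N,r}(c\,\chi_Q\,dx)=Q_r([0,1]^d)\,c\,|Q|^{1+r/d}.
\end{equation*}
Using the moment condition \eqref{eq:moment}, I would first reduce to the case $\mu^s\equiv 0$ with $h$ bounded and compactly supported: allocating a few auxiliary points to a coarse grid covering $\mathbb{R}^d\setminus B_R$ bounds the tail contribution by $\int_{|x|>R}|x|^r\,d\mu\le R^{-\delta}\int|x|^{r+\delta}\,d\mu$, which becomes $o(N^{-r/d})$ after letting $R=R(N)\to\infty$ slowly; the singular part $\mu^s$ is absorbed analogously.

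\textbf{Upper bound.} Partition $\supp h$ by a fine grid of cubes $\{Q_i\}$ of common side $L$ and set $c_i:=|Q_i|^{-1}\int_{Q_i}h\,dx$. Given $N$, distribute the points proportionally:
\begin{equation*}
N_i:=\Bigl\lfloor N\,\frac{c_i^{d/(d+r)}|Q_i|}{S}\Bigr\rfloor,\qquad S:=\sum_{j}c_j^{d/(d+r)}|Q_j|,
\end{equation*}
and inside each $Q_i$ place a near-optimizer of $V_{N_i,r}(c_i\chi_{Q_i}dx)$. Summing the local errors and using that this allocation realizes the equality case of H\"older with exponents $(d+r)/d$ and $(d+r)/r$ gives
\begin{equation*}
\limsup_{N\to\infty}N^{r/d}V_{N,r}(\mu)\le Q_r([0,1]^d)\,S^{(d+r)/d}.
\end{equation*}
Refining the grid ($L\to 0$) and applying dominated convergence (using $h\in L^\infty$ of compact support) gives $S\to\int h^{d/(d+r)}dx$, yielding \eqref{close1} in the $\limsup$ direction.

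\textbf{Lower bound and weak convergence.} For an arbitrary configuration $\{x^1,\ldots,x^N\}$, let $\widetilde N_j$ count the $x^i$'s lying in a small enlargement of $Q_j$. A restriction-and-monotonicity argument combined with the local scaling identity produces
\begin{equation*}
\int_{Q_j}\min_i d(x^i,y)^r\,d\mu(y)\ge Q_r([0,1]^d)\,(c_j-\eta)\,|Q_j|^{1+r/d}\,\widetilde N_j^{-r/d}
\end{equation*}
up to a boundary correction. Summing over $j$ and applying H\"older in the reverse direction to $\sum_j c_j^{d/(d+r)}|Q_j|$, together with $\sum_j\widetilde N_j\le N(1+o(1))$, produces the matching $\liminf$ bound. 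To prove \eqref{close}, consider a subsequence of minimizers with $\mu_N:=\frac{1}{N}\sum\delta_{x^i}\rightharpoonup\nu$; testing against $\chi_{Q_j}$ and exploiting the saturation of H\"older in the lower bound argument forces the empirical allocation fractions to match those in the upper bound construction, pinning $\nu$ uniquely to the density in \eqref{close} and then propagating the convergence to the whole sequence.

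\textbf{Main obstacle.} The delicate technical point is the buffer-zone analysis underlying the lower bound: points located just outside $Q_j$ can still serve mass inside it, so one needs (i) a pigeonhole bound on the number of cells penetrating each cube in terms of $\widetilde N_j$, and (ii) a quantitative localization showing that restricting the quantization error to $Q_j$ costs at most a boundary-layer term that vanishes as $L\to 0$. Combined with the moment-based tail truncation, this is what converts the above heuristic into rigorous matching bounds.
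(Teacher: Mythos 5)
First, note that the paper does not actually prove Theorem~\ref{thm: R^d}: it is quoted as a known result from \cite{BW} and \cite[Chapter 2, Theorems 6.2 and 7.5]{GL}, and is even invoked as a black box inside the paper's own arguments (in Section~\ref{sec:compact}, applied to uniform measures on parallelepipeds of the form $A^{1/2}(Q)$). Nevertheless, the paper's compact-case proof in Section~\ref{sec:compact} essentially re-derives the first claim of Theorem~\ref{thm: R^d} for compactly supported measures by the very cube-decomposition strategy you outline, so a comparison is still meaningful. Your reductions, the scaling and subadditivity identity for $Q_r([0,1]^d)$, the moment-based tail cutoff, and the upper-bound allocation of $N_i\propto c_i^{d/(d+r)}|Q_i|$ points per cube are all correct and coincide with \cite{GL} and with the paper.

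The genuine gap is in the lower bound, precisely at the step you flag as the main obstacle, and your proposed bookkeeping does not close it. You set $\widetilde N_j$ equal to the number of points in a small enlargement of $Q_j$ and assert $\sum_j \widetilde N_j \le N(1+o(1))$; this is false in general, because neighbouring enlargements overlap and nothing prevents a near-optimal configuration from clustering near grid corners, so each point may be counted up to $2^d$ times, giving only $\sum_j \widetilde N_j \le 2^d N$. That factor propagates through the reverse H\"older step and destroys the match with the upper-bound constant. The device that \cite{GL} and the paper (Section 2.4) use instead is a \emph{firewall}: fix $\e\ll L$, shrink each cube to $Q_{j,\e}:=\{y\in Q_j:\dist(y,\partial Q_j)>\e\}$, and surround it by a \emph{fixed} finite set $\gamma_j$ of $K_j$ points such that every $y\in Q_{j,\e}$ is at least as close to some point of $\gamma_j$ as to any point outside $Q_j$. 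Then
\begin{equation*}
\int_{Q_{j,\e}}\min_i d(x^i,y)^r\,d\mu(y)\ \ge\ V_{N_j+K_j,\,r}\bigl(\mu\llcorner Q_{j,\e}\bigr),\qquad N_j:=\#\{i:\ x^i\in Q_j\},
\end{equation*}
so the $N_j$ are counted disjointly with $\sum_j N_j\le N$, and the $K_j$ are bounded independently of $N$ and absorbed once one separately verifies that $N_j\to\infty$ for every $j$ (otherwise $N^{r/d}V_{N,r}\to\infty$, contradicting your own upper bound). With this replacement, the reverse H\"older inequality (\cite[Lemma 6.8]{GL}) and the limits $\e\to0$, $L\to0$ give the matching lower bound, and the equality-in-H\"older argument you sketch for \eqref{close} then goes through along the lines of \cite[Theorem 7.5]{GL}.
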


The first statement in the above theorem has been generalized to the case of absolutely continuous probability measures on compact Riemannian manifolds in \cite{B}. The aim of this paper is twofold: we first give a shorter proof of Theorem \ref{thm: R^d} for general probability measures on compact manifolds, and then we extend it to arbitrary  measures on non-compact manifolds.
As we shall see, passing from the compact to the non-compact setting presents nontrivial difficulties. Indeed, while the compact case relies on a localization argument that allows one to mimic the proof in $\mathbb{R}^d$, the non-compact case requires additional new ideas. In particular one needs to find a suitable analogue of the moment condition \eqref{eq:moment} to control the growth at infinity of our given probability measure. We will prove that the needed growth assumption depends on the curvature of the manifold (and more precisely, on the size of the differential of the exponential map).\\

To state in detail our main result we need to introduce some notation:
given a point $x_0 \in \M$, we can 
consider polar coordinates $(\r,\vte)$ on $T_{x_0}\M\simeq \br^d$ induced by the constant metric $g_{x_0}$,
where $\vartheta$ denotes a vector on the unit sphere $\S^{d-1}$.
Then, we can define the following quantity that measures the size of the differential of the exponential map when restricted to a sphere 
$\S^{d-1}_\rho\subset T_{x_0}\M$ of radius $\rho$:
\be
\label{eq:Arho}
A_{x_0}(\rho):=\sup_{v\in \S^{d-1}_\rho,\,w\in T_{v}\S^{d-1}_\rho,\, |w|_{x_0}=\rho} \Bigl| d_v \exp_{x_0}[w]\Bigr|_{\exp_{x_0}(v)},
\ee
To prove asymptotic quantization, we shall impose an analogue of  \eqref{eq:moment} which involves the above quantity.
%First we prove an analogue of Theorem \ref{thm: R^d} for general measures on Riemannian manifolds. 
\begin{thm}\label{thm:main}
Let $(\M,g)$ be a complete Riemannian manifold without boundary, and 
 let $\mu=\hh \,d\vol+\mu^s$ be a probability measure on $\M$.
 Assume there exist a point $x_0 \in \M$ and $\delta>0$ such that 
\be
\label{eq:moment M}
 \int_{\M} d(x,x_0)^{r+\delta}\,d\mu(x)+  \int_{\M} A_{x_0}\bigl({d(x,x_0)}\bigr)^r  \,d\mu(x)<\infty.
 \ee
Then \eqref{close1} holds.
\end{thm}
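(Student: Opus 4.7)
The plan is to reduce to Theorem \ref{thm: R^d} by truncating $\mu$ at a large radius $R$ around the reference point $x_0$. Write $B_R:=\{x\in\M:d(x,x_0)\le R\}$ and split $\mu=\mu_R+\mu^R$, with $\mu_R:=\mu|_{B_R}$ and $\mu^R:=\mu-\mu_R$. The argument then decomposes into (a) the compact asymptotic for $\mu_R$; (b) a straightforward lower bound via monotonicity; (c) a matching upper bound, which requires a Pierce-type tail estimate in which the hypothesis on $A_{x_0}$ is crucial.

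Step (a) is an extension of Theorem \ref{thm: R^d} to the compact Riemannian setting. I would establish it by covering $\overline{B_R}$ with finitely many geodesic normal charts on which the metric is $(1\pm\eta)$-close to the Euclidean one, decomposing $\mu_R$ through a partition of unity into pieces each supported in a single chart, applying Theorem \ref{thm: R^d} in each chart after distributing the $N$ quantizer points proportionally to the masses of the pieces, and letting $\eta\to 0$. Sub- and super-additivity of $V_{N,r}$ across disjoint supports then assemble the local formulae into
\be\label{plan:comp}
\lim_{N\to\infty}N^{r/d}V_{N,r}(\mu_R)=Q_r([0,1]^d)\Bigl(\int_{B_R}h^{d/(d+r)}\,d\vol\Bigr)^{(d+r)/d}.
\ee
Both the absolutely continuous and the singular parts are handled locally by Theorem \ref{thm: R^d}. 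Step (b) is immediate: $V_{N,r}$ is monotone in the measure, so $V_{N,r}(\mu)\ge V_{N,r}(\mu_R)$; taking $\liminf_N$ in \eqref{plan:comp} and then letting $R\to\infty$ by monotone convergence yields the $\liminf$ inequality in \eqref{close1}.

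Step (c) is the heart of the proof. The starting observation is
$$
V_{N,r}(\mu)\;\le\;V_{N-M,r}(\mu_R)+V_{M,r}(\mu^R),
$$
used with $M=\lfloor\e N\rfloor$: by (a), the first term contributes $(1-\e)^{-r/d}$ times the right-hand side of \eqref{close1} computed on $B_R$. What remains is to prove a Pierce-type inequality on $\M$: there exists $C=C(\nu)$ with $C\to 0$ if $\nu$ is supported far from $x_0$, such that $V_{K,r}(\nu)\le C K^{-r/d}$ for every finite measure $\nu$ satisfying
$\int d(x,x_0)^{r+\delta}\,d\nu+\int A_{x_0}(d(x,x_0))^r\,d\nu<\infty$.
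My plan is the classical dyadic-shell construction: decompose $\M$ into the shells $E_k:=\{2^k\le d(\cdot,x_0)<2^{k+1}\}$, lift each $E_k$ to the annulus $\{2^k\le|v|<2^{k+1}\}\subset T_{x_0}\M$ via $\exp_{x_0}^{-1}$, and use an equispaced grid of $m_k$ points in this annulus as quantizers. Since $\exp_{x_0}$ is a radial isometry whose tangential distortion at radius $\rho$ is bounded precisely by $A_{x_0}(\rho)$, the Voronoi cells on $E_k$ have diameter $\lesssim (1+A_{x_0}(2^{k+1}))\,2^k m_k^{-1/d}$, and therefore
$$
V_{m_k,r}(\nu|_{E_k})\;\lesssim\;\bigl(A_{x_0}(2^{k+1})\,2^k\bigr)^r m_k^{-r/d}\,\nu(E_k).
$$
A H\"older-type optimization in the $(m_k)$ under the constraint $\sum_k m_k\le K$ yields the required inequality: the $(r+\delta)$-moment controls $\sum 2^{k(r+\delta)}\nu(E_k)$, while $\int A_{x_0}^r\,d\nu<\infty$ controls the additional curvature factor, so the two assumptions in \eqref{eq:moment M} are exactly what is needed for the sums to converge.

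The main technical difficulty I anticipate is the cut locus: since $\exp_{x_0}^{-1}$ is not a global diffeomorphism, the "annulus" lift of $E_k$ is not literally an annulus, and the diameter estimate on Voronoi cells must be carried out by direct metric comparisons along minimizing radial geodesics from $x_0$, using $A_{x_0}$ only as a pointwise upper bound on the tangential part of $d\exp_{x_0}$. Once this technical point is handled cleanly, the combination of steps (a)--(c) gives both inequalities in \eqref{close1} and completes the proof.
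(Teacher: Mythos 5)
Your overall architecture (compact case $+$ lower bound by monotonicity $+$ Pierce-type tail estimate to close the limsup) matches the paper's, and steps (a) and (b) are sound (the paper's compact argument also localizes in charts and reduces to Theorem~\ref{thm: R^d}, though it uses a disjoint-cube decomposition of the charts rather than a partition of unity). The gap is in step (c).

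Two problems with the proposed tail estimate. First, a factor error: with $A_{x_0}$ normalized as in \eqref{eq:Arho} (where $|w|_{x_0}=\rho$, so the tangential stretch on a \emph{unit} vector is $A_{x_0}(\rho)/\rho$), an equispaced Euclidean grid of $m_k$ points in the annulus $\{2^k\le |v|<2^{k+1}\}$ has Euclidean spacing $\sim 2^k m_k^{-1/d}$ and its image Voronoi cells on $E_k$ have diameter
$\lesssim \max\bigl(2^k,\,A_{x_0}(2^{k+1})\bigr)\, m_k^{-1/d}$,
not $(1+A_{x_0}(2^{k+1}))\,2^k m_k^{-1/d}$; your formula is inflated by an extra factor $2^k$. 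Second, and more seriously, even with the corrected estimate the proposed H\"older optimization over $(m_k)$ does not close under the hypothesis \eqref{eq:moment M}. Writing $a_k:=\max(2^k,A_{x_0}(2^{k+1}))^r\,\nu(E_k)$, the Lagrangian choice $m_k\propto a_k^{d/(d+r)}$ gives $V_{K,r}(\nu)\lesssim K^{-r/d}\bigl(\sum_k a_k^{d/(d+r)}\bigr)^{(d+r)/d}$, and one must check $\sum_k \bigl(A_{x_0}(2^{k})^r\nu(E_k)\bigr)^{d/(d+r)}<\infty$. This is a genuinely stronger requirement than $\int A_{x_0}^r\,d\mu<\infty$: taking for instance $A_{x_0}(\rho)\sim\rho^2$ and $\nu(E_k)\sim 2^{-2kr}/k^{1+\epsilon}$ with $0<\epsilon<r/d$, both integrals in \eqref{eq:moment M} are finite (with $\delta<r$) while $\sum_k \bigl(A_{x_0}(2^k)^r\nu(E_k)\bigr)^{d/(d+r)}=\sum_k k^{-(1+\epsilon)d/(d+r)}=\infty$. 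What you would actually need is $\int A_{x_0}(d(x,x_0))^r\,d(x,x_0)^s\,d\mu<\infty$ for some $s>0$, which is not assumed. The structural reason is that an isotropic per-shell grid forces each shell to pay for its own angular resolution out of its budget $m_k$, coupling radial and angular scales.

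The paper avoids this by using a \emph{polar} grid that shares the angular directions across all radii: it takes $N^{d-1}$ points $\vte_k$ on $\S^{d-1}$ with mesh $C/N$ (fixed, used for every radius) and $N$ radii $\rho_i$ chosen optimally for the one-dimensional pushforward $\mu_1=(d_{x_0})_\#\mu$, so that $N^dV_{N^d,r}(\mu)$ splits via $(a+b)^r\le 2^{r-1}(a^r+b^r)$ into a radial term controlled by the one-dimensional Pierce lemma (Lemma~\ref{lem:1D}, requiring exactly $\int d^{r+\delta}\,d\mu<\infty$) and an angular term equal to $C^r\int A_{x_0}^r\,d\mu$. No H\"older optimization over shells is needed, and the two integrals in \eqref{eq:moment M} appear separately and without extra powers. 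If you want to salvage the shell picture, you should make the grid anisotropic in polar coordinates (constant angular mesh $\sim 1/N$ across all shells, variable radial mesh $n_k$ with $\sum_k n_k\lesssim N$), which is in effect the paper's construction; with a truly isotropic Euclidean grid per shell, the stated hypothesis is not enough. The cut-locus point you flag is real but secondary: the paper bypasses it entirely by bounding $d(x,x_{i,k})$ from above along explicit comparison curves (a spherical arc followed by a radial geodesic), which requires no inverse of $\exp_{x_0}$.
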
 

Once this theorem is obtained, by the very same argument as in \cite[Proof of Theorem 7.5]{GL}
one gets the following:

\begin{cor}
Let $(\M,g)$ be a complete Riemannian manifold without boundary, $\mu=\hh \,d\vol$ an absolutely continuous probability measure on $\M$ and let $x^1, \ldots, x^N$ minimize the functional $F_{N,r}: \M^{\otimes N} \to \br^+.$ Assume there exist a point $x_0 \in \M$ and $\delta>0$ for which \eqref{eq:moment M} is satisfied. Then
\eqref{close} holds.
\end{cor}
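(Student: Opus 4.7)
The plan is to follow, essentially verbatim, the argument in the proof of \cite[Chapter 2, Theorem 7.5]{GL}, using our Theorem \ref{thm:main} in place of its Euclidean predecessor as the only genuinely analytic input. Denote by $\nu_N := \frac{1}{N}\sum_{i=1}^N \delta_{x^i}$ the empirical measure of an optimal $N$-point configuration, and by
$$
\nu_\infty := \frac{1}{Z}\,\hh^{d/(d+r)}\,d\vol, \qquad Z := \int_{\M} \hh^{d/(d+r)}\,d\vol,
$$
the candidate limit. It suffices to show that every weak-$*$ subsequential limit of $(\nu_N)$ coincides with $\nu_\infty$, since then $\nu_N \rightharpoonup \nu_\infty$ along the full sequence.

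The first step is to establish tightness of $\{\nu_N\}$. The moment hypothesis \eqref{eq:moment M} guarantees that $\mu$ has arbitrarily small mass outside a sufficiently large geodesic ball $B_R(x_0)$; optimality of $(x^1,\ldots,x^N)$ then forces all but a vanishing proportion of the quantizers to lie in a slightly enlarged ball, since any $x^i$ at distance much larger than $R$ from $x_0$ contributes negligibly to $F_{N,r}$ and may be moved back to $B_R(x_0)$ without increasing the cost. This is the direct manifold translation of the rearrangement argument in \cite[Chapter 2, Lemmas 7.3--7.4]{GL}, which uses only the triangle inequality.

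Next, let $\eta$ be a weak-$*$ subsequential limit of $(\nu_N)$ and decompose $\eta = \phi\,d\vol + \eta^s$. The crucial estimate is the lower bound
\begin{equation}\label{eq:LB}
\liminf_{N\to\infty} N^{r/d}\,V_{N,r}(\mu) \;\ge\; Q_r\([0,1]^d\)\int_{\M} \hh(x)\,\phi(x)^{-r/d}\,d\vol(x),
\end{equation}
with the convention that the integrand vanishes on $\{\hh=0\}$. One proves \eqref{eq:LB} by partitioning $\M$ into small Borel cells $(A_j)$ with $\eta(\pt A_j)=0$, setting $N_j := \#\{i : x^i \in A_j\}$ so that $N_j/N \to \eta(A_j)$ by the Portmanteau theorem, applying Theorem \ref{thm:main} to $\mu|_{A_j}$ cell by cell, and summing. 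Inside a small cell the Riemannian geometry is essentially Euclidean and the curvature contribution is absorbed into a vanishing error as $\mathrm{diam}(A_j)\to 0$, exactly as in the $\br^d$ case.

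Combining \eqref{eq:LB} with the exact asymptotic provided by Theorem \ref{thm:main} yields
$$
\int_{\M} \hh\,\phi^{-r/d}\,d\vol \;\le\; Z^{(d+r)/d}.
$$
By H\"older's inequality with exponents $(d+r)/d$ and $(d+r)/r$,
$$
Z \;=\; \int_\M \hh^{d/(d+r)}\,d\vol \;\le\; \biggl(\int_\M \hh\,\phi^{-r/d}\,d\vol\biggr)^{d/(d+r)}\biggl(\int_\M \phi\,d\vol\biggr)^{r/(d+r)} \;\le\; \biggl(\int_\M \hh\,\phi^{-r/d}\,d\vol\biggr)^{d/(d+r)},
$$
i.e., the reverse inequality holds as well, with equality if and only if $\phi \propto \hh^{d/(d+r)}$, $\int\phi\,d\vol = 1$, and $\eta^s = 0$. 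This forces $\eta = \nu_\infty$, completing the argument. The main obstacle is the localization step behind \eqref{eq:LB}: each small cell $A_j$ must be flattened through an exponential chart while the metric distortion is kept under control, which is precisely where the geometric part of the hypothesis \eqref{eq:moment M}, through the bound on $A_{x_0}(\rho)$, comes into play.
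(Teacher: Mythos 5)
Your proposal takes exactly the same route as the paper, which simply invokes the argument of \cite[Proof of Theorem 7.5]{GL} with Theorem \ref{thm:main} replacing the Euclidean Zador asymptotic; the tightness--subsequential-limit--cellwise lower bound--H\"older scheme you sketch is a faithful outline of that argument. The paper does not spell out these steps at all, so your version is more explicit, but it is not a different proof.
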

Notice that the quantity $A_{x_0}$ is related to the curvature of $\M,$ being linked to the size of the Jacobi fields (see for instance \cite[Chapter 10]{L}).
In particular, if $\M=\H^d$ is the hyperbolic space then $A_{x_0}(\r)=\sinh \r$,
while on $\br^d$ we have $A_{x_0}(\r)=\r$. Hence the above condition on $\H^d$ reads as
$$
\bigg(1+ \int_{\H^d} d(x,x_0)^{r+\delta}\,d\mu(x)+  \int_{\H^d} \sinh\bigl({d(x,x_0)}\bigr)^r  \,d\mu(x)u \bigg) \approx \int_{\H^d} e^{r\,d(x,x_0)} \,d\mu(x),
$$
and on $\br^d$ as
$$
\bigg(1+ \int_{\br^d} d(x,x_0)^{r+\delta}\,d\mu(x)+  \int_{\br^d} {d(x,x_0)}^r  \,d\mu(x)u \bigg) \approx \int_{\br^d}d(x,x_0)^{r+\delta}\,d\mu(x).
$$
Hence \eqref{eq:quantiz noncpt} holds on $\H^d$ for any probability measure $\mu$ satisfying 
$$
\int_{\H^d} e^{r\,d(x,x_0)} \,d\mu(x)<\infty
$$
for some $x_0 \in \H^d$, while on $\br^d$ we only need the finiteness of some $(r+\delta)$-moments of $\mu$,
therefore recovering the assumption in Theorem \ref{thm: R^d}.
More in general, thanks to Rauch Comparison Theorem \cite[Theorem 11.9]{L}, the size of the Jacobi fields 
on a manifold $\M$ with sectional curvature bounded from below by $-K$ ($K\geq 0$) is controlled by the 
Jacobi fields on the hyperbolic space with sectional curvature $-K$.
Hence in this case $$A_{x_0}(\r) \leq \sinh(Kr) \approx e^{Kr},$$ and 
Theorem
\ref{thm:main} yields the following:
\begin{cor}
Let $(\M,g)$ be a complete Riemannian manifold without boundary, and 
 let $\mu=\hh \,d\vol+\mu^s$ be a probability measure on $\M$.
 Assume that the sectional curvature of $\M$ is bounded from below by $-K$  for some $K\geq 0$,
 and that there exist a point $x_0 \in \M$ and $\delta>0$ such that 
$$
 \int_{\M} d(x,x_0)^{r+\delta}\,d\mu(x)+  \int_{\M} e^{Kr \,d(x,x_0)}  \,d\mu(x)<\infty.
$$
Then \eqref{close1} holds.
In addition, if $\mu^s\equiv 0$ and
$x^{1}, \ldots, x^{N}$ minimize the functional $F_{N,r}: (\mathbb{R}^d)^N \ra \mathbb{R}^+$, then 
\eqref{close} holds.
\end{cor}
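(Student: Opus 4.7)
The plan is to deduce this corollary directly from Theorem \ref{thm:main} (and its accompanying corollary for the empirical distribution) by showing that the curvature-based hypothesis given here implies the general growth condition \eqref{eq:moment M}. The only real work is to translate a lower bound on the sectional curvature into a pointwise bound on the quantity $A_{x_0}(\rho)$ introduced in \eqref{eq:Arho}.

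The first step is geometric. I would fix the point $x_0\in\M$ and, for a unit vector $\vartheta\in\S^{d-1}\subset T_{x_0}\M$ and a Jacobi field $J(t)$ along the geodesic $t\mapsto \exp_{x_0}(t\vartheta)$ with $J(0)=0$ and $|J'(0)|=1$, recall that $d_{\rho\vartheta}\exp_{x_0}[w]$ (with $w\perp\vartheta$ of norm $\rho$) is exactly $J(\rho)$ for an appropriate normalization. The Rauch Comparison Theorem \cite[Theorem 11.9]{L}, applied using the hypothesis $\mathrm{sec}\ge -K$, then gives
\[
\bigl|d_{\rho\vartheta}\exp_{x_0}[w]\bigr|_{\exp_{x_0}(\rho\vartheta)}\;\le\;\frac{\sinh\bigl(\sqrt{K}\,\rho\bigr)}{\sqrt{K}}\,|w|_{x_0}
\]
(and simply $\rho|w|_{x_0}$ in the limiting case $K=0$). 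Taking the supremum over admissible $w$ as in the definition of $A_{x_0}(\rho)$ yields
\[
A_{x_0}(\rho)\;\le\;\rho\,\frac{\sinh(\sqrt{K}\rho)}{\sqrt{K}\rho}\cdot \rho\;\lesssim\;(1+\rho)\,e^{\sqrt{K}\,\rho},
\]
and after adjusting constants this can be absorbed into a bound of the form $A_{x_0}(\rho)^r\le C(1+e^{Kr\rho})$, possibly at the cost of enlarging $K$ by a harmless multiplicative constant (the condition $\int e^{Kr\,d(x,x_0)}d\mu<\infty$ is stable under such a change since one can always use a slightly larger $K'$, and for $K=0$ the polynomial bound is subsumed by the moment assumption).

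The second step is now purely a comparison of integrals. Combining the pointwise bound on $A_{x_0}(\rho)$ with the hypothesis
\[
\int_{\M} d(x,x_0)^{r+\delta}\,d\mu(x)+\int_{\M} e^{Kr\,d(x,x_0)}\,d\mu(x)<\infty,
\]
we obtain
\[
\int_{\M} A_{x_0}\bigl(d(x,x_0)\bigr)^r\,d\mu(x)\;\le\;C\int_{\M}\bigl(1+e^{Kr\,d(x,x_0)}\bigr)\,d\mu(x)\;<\;\infty,
\]
so condition \eqref{eq:moment M} of Theorem \ref{thm:main} is satisfied. Invoking that theorem yields \eqref{close1}. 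When in addition $\mu^s\equiv 0$ and $(x^1,\dots,x^N)$ minimizes $F_{N,r}$, the corollary stated immediately after Theorem \ref{thm:main} applies verbatim and gives \eqref{close}.

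The only nontrivial ingredient is the Jacobi-field comparison, and the main thing to be careful about is the precise identification of $A_{x_0}(\rho)$ with the norm of a Jacobi field of the appropriate initial conditions, together with verifying that the upper bound $\sinh(\sqrt{K}\rho)/\sqrt{K}$ is indeed valid up to the first conjugate point and, beyond it, can be replaced by a distance-comparison argument (since the exponential map is still 1-Lipschitz in the radial direction and $d(\exp_{x_0}(v),x_0)\le|v|$). After this geometric reduction the corollary is essentially a direct corollary of Theorem \ref{thm:main}, as indicated by the informal discussion immediately preceding its statement.
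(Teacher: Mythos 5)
This is the same route the paper takes: Rauch comparison to bound $A_{x_0}$, then plug into Theorem \ref{thm:main} (and its accompanying corollary for \eqref{close}). Two small bookkeeping points, though. First, once $w$ has norm $\rho$, the vector $d_{\rho\vartheta}\exp_{x_0}[w]$ \emph{is} already a Jacobi field $J(\rho)$ with unit initial derivative, so Rauch gives $A_{x_0}(\rho)\le\sinh(\sqrt{K}\rho)/\sqrt{K}$ directly; your displayed inequality, which carries an extra $|w|_{x_0}$ on the right-hand side, overcounts by a factor of $\rho$. Second, your remark that one may ``enlarge $K$ by a harmless multiplicative constant'' is not actually harmless: $\int e^{K'r\,d(x,x_0)}\,d\mu<\infty$ with $K'>K$ is a strictly \emph{stronger} hypothesis than the one given, so you cannot upgrade to it. Neither issue is fatal --- the spurious polynomial factor in $\rho$ is absorbed by the $d(x,x_0)^{r+\delta}$-moment term, and with the normalization the paper evidently intends (read $\sec\ge-K^2$, consistent with its claim $A_{x_0}(\rho)\lesssim e^{K\rho}$) no enlargement of $K$ is needed --- but the argument should be phrased so as not to rely on either shortcut. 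Finally, your worry about conjugate points is vacuous here: the comparison space form has nonpositive curvature, hence no conjugate points, so the Rauch upper bound is valid for all $\rho>0$.
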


Finally, we show that the moment  condition \eqref{eq:moment} required on $\br^d$ is not sufficient to ensure the validity 
of the result on $\H^d$. Indeed we can provide the following counter example on $\mathbb H^2.$

\begin{thm}
\label{thm:counter}
There exists a measure $\mu$ on $\mathbb H^2$ such that
$$
\int_{\H^2} d(x,x_0)^p\,d\mu<\infty \qquad \forall\,p >0,\,\forall \,x_0 \in \H^2,
$$
but
$$
N^{r/2} V_{N,r}(\mu) \to \infty \qquad \text{as }N\to \infty.
$$
\end{thm}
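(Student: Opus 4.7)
The idea is to exploit the exponential volume growth of $\H^2$: a hyperbolic ball of radius $s$ has volume at most $2\pi e^s$, so $N$ points can cover at most volume of order $N e^s$, which is far less efficient than the polynomial covering $N s^2$ achievable in $\br^2$. A measure whose mass at infinity decays only sub-exponentially can therefore defeat the $N^{-r/2}$ rate while still admitting all polynomial moments. I would construct $\mu$ supported on the annuli $A_k := \{x \in \H^2 : k \le d(x,x_0) \le k+1\}$, $k \ge 1$, by setting
\be
\mu := \sum_{k \ge 1} m_k\,\frac{\chi_{A_k}}{\vol(A_k)}\,d\vol, \qquad m_k := c\,k\,e^{-rk/2},
\ee
where $c > 0$ is chosen so that $\mu$ has total mass $1$. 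Since $\vol(A_k) \asymp e^k$ and $d(x,x_0) \le k+1$ on $A_k$,
\be
\int_{\H^2} d(x,x_0)^p\,d\mu \le \sum_k m_k\,(k+1)^p = c\sum_k k\,(k+1)^p\, e^{-rk/2} < \infty \quad \forall\,p > 0,
\ee
so $\mu$ has all polynomial moments finite.

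For the lower bound, fix $N$ and any $\alpha \subset \H^2$ with $|\alpha| \le N$. Using $\vol(B_s(a)) \le 2\pi e^s$ and $\vol(A_k) \ge c_0 e^k$, the set $E_s := \bigcup_{a \in \alpha} B_s(a)$ satisfies $\vol(E_s \cap A_k)/\vol(A_k) \le C N e^{s-k}$. Choosing $s := k - \log N - C'$ with $C'$ sufficiently large forces $\mu|_{A_k}(E_s) \le m_k/2$, so at least half of the $\mu$-mass on $A_k$ lies at distance $\ge s$ from every point of $\alpha$. Hence
\be
V_{N,r}(\mu) \ge \int_{A_k} \min_{a \in \alpha} d(x,a)^r\,d\mu \ge \frac{m_k}{2}\,(k - \log N - C')^r.
\ee
Taking $k = k(N) := \lceil \log N + C' + 1\rceil$ makes the second factor $\ge 1$ while $m_{k(N)} \asymp (\log N)\,N^{-r/2}$, and therefore $N^{r/2}\,V_{N,r}(\mu) \gtrsim \log N \to \infty$.

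The main obstacle is the quantitative calibration of $m_k$: the weights must decay slowly enough compared to $\vol(A_k) \asymp e^k$ that no $N$-point configuration can achieve the $N^{-r/2}$ rate on $A_k$ for $k$ slightly above $\log N$, yet quickly enough to preserve every polynomial moment. These constraints are compatible only because, in hyperbolic geometry, the covering deficit $(k - \log N)^r$ collapses to an $O(1)$ factor near $k \approx \log N$; this leaves just enough room to take $m_k$ as small as $k\,e^{-rk/2}$, while still making $N^{r/2}\,m_{k(N)}$ diverge logarithmically.
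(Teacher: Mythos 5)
Your proof is correct, and it takes a genuinely different route from the paper. The paper's counterexample is the purely singular measure $\mu = \sum_{k} e^{-(1+\e)k}\,\HH^1\llcorner\S^1_k$ concentrated on circles of integer radius, and the key lemma there is combinatorial: $\S^1_R$ contains about $e^R/(2R)$ arcs that are pairwise $c$-separated (verifying the separation requires a small case analysis on whether the connecting geodesic dips into $B_{R-1}$), so any $M$ points can approximate only $M$ of these arcs within $c/2$, yielding $V_{M,r}(\HH^1\llcorner\S^1_R) \gtrsim \bigl(e^R/(2R)-M\bigr)_+ R$; summing over $k\ge\log(N^4)$ then gives divergence at the polynomial rate $N^{r/2-2\e}$. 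Your measure is absolutely continuous on annuli $A_k$ with weights $m_k = c\,k\,e^{-rk/2}$, and your key estimate is volume-theoretic rather than combinatorial: since $\vol(B_s)\le 2\pi e^s$ and $\vol(A_k)\asymp e^k$, any $N$ balls of radius $s=k-\log N-C'$ cover at most half of $A_k$, which after integrating the escaping mass gives $V_{N,r}(\mu)\ge \tfrac12 m_k s^r$; with $k=\lceil\log N+C'+1\rceil$ this yields $N^{r/2}V_{N,r}(\mu)\gtrsim\log N$. Both arguments exploit the exponential volume growth of $\H^2$ to defeat the $N^{-r/2}$ rate; your volume-counting step is cleaner than the arc-separation geometry, at the cost of a slower (logarithmic) divergence rate, which could be improved by summing over all $k\ge\log N+C'+1$ as the paper does, though this is not required. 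The extra factor of $k$ in your $m_k$ is exactly what promotes a uniform lower bound to genuine divergence; without it you would only get $N^{r/2}V_{N,r}(\mu)\gtrsim 1$.
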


The paper is structured as follows: first, in Section \ref{sec:compact} we prove Theorem \ref{thm:main} for compactly supported probability measures. Then, in Section \ref{sec:non-compact} we deal with the non-compact case concluding the proof of Theorem \ref{thm:main}. Finally, in Section \ref{sec:counter} we prove Theorem \ref{thm:counter}.
%ricordare di dire che consideriamo misure non negative di massa finita; corollario per dire che la misura empirica converge alla misura alla potenza giusta.
\section{Proof of Theorem \ref{thm:main}: the compact case} 
\label{sec:compact}

This section is concerned with the study of asymptotic quantization for probability distributions on compact Riemannian manifolds as the number $N$ of points  tends to infinity. Although the problem depends a priori on the global geometry of the manifold (since $V_{N,r}$ involves 
the Riemannian distance), we shall now show how a localization argument allows us to prove the result.

\subsection{Localization argument}

Let $(\M, g)$ be a complete Riemannian manifold without boundary and let $\mu$ be a probability measure on $\M.$ We consider $\{ \U_i, \vp_i\}_{i\in I}$ an atlas covering $\M$, and $\vp_i: \W_i \to \br^d$ smooth charts, where $\W_i \supset\supset \U_i$ for all $i \in I.$ 
As we shall see, in order to be able to split our measure as a sum of measures supported on smaller sets,
we want to avoid the mass to concentrate on the boundary of the sets $\U_i$.
Hence, up to slightly changing the sets $\U_i$, we may assume that
\be
\label{eq:no boundary}
\mu(\partial \U_i)=0\qquad \forall\,i \in I.
\ee
We want to cover $\M$ with an atlas of disjoint sets, up to sets of $\mu$-measure zero. To do that
we define
$$
\V_i:=\U_i\setminus \biggl(\bigcup_{j=1}^{i-1}\U_j\biggr).
$$
Notice that we still have $\V_i\subset\subset \W_i$.

Given an open subset of $\br^d,$ by \cite[Lemma 1.4.2]{C}, we can cover it with a countable partition of half-open disjoint cubes  such that the maximum length of the edges is a given number $\delta$. We now apply this observation to each open subset $\vp_i(\overset{\circ}{\V}_i) \subset\br^d$ and we cover it with a family $\mathcal G_i$ of half-open cubes $\{Q_{i,j}\}_{j \in \mathbb{N}}$ with edges of length $\ell_j\le \delta.$

\begin{figure}[h]
\centerline{\includegraphics[scale=0.5]{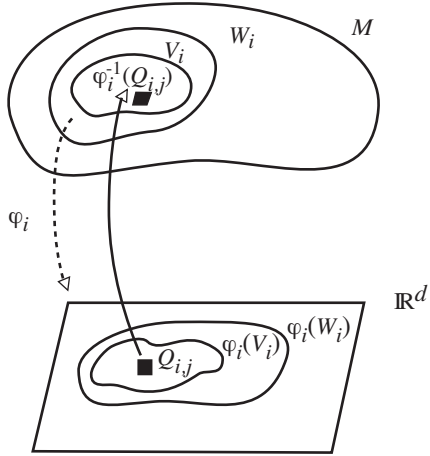}}
\caption{\label{fig1} We use the map $\varphi_i^{-1}:\varphi_i(\W_i)\subset \br^d \to \W_i\subset \M$ to send the partition in cubes $Q_{i,j}$
of $\varphi_i(\V_i)$ on $\M$.}
\end{figure}

%We consider a $\delta$-grid $\mathcal G$ on $\br^d$, that is, a partition of $\br^d$ in cubes $\{Q_{j}\}_{j \in \mathbb N} =\mathcal G$ of side $\delta$ where each cube is a translated copy of $(0,\delta]^n$
%by a vector in $\delta \mathbb Z^n$, and for every $i \in I$ we consider the family of cubes
%$$
%\mathcal G_i:=\{Q_{j} \in \mathcal G : Q_{i,j} \cap \vp_i(\V_i)\neq \emptyset\},
%$$
We notice that the ``cubes'' $\vp_i^{-1}(Q_{i,j})\subset \M$ are disjoint and 
$$
\bigcup_{i \in I} \bigcup_{Q_{i,j}\in \mathcal G_i} Q_{i,j}=\M\setminus \biggl(\bigcup_i \partial \U_i\biggr)
$$
Since by \eqref{eq:no boundary} the set $\cup_i \partial \U_i$ has zero $\mu$-measure, we can decompose the 
measure $\mu$ as 
$$
\mu=\sum_{i \in I} \mu \one_{\V_i}=\sum_{i \in I} \sum_{Q_{i,j}\in \mathcal G_i}\mu \one_{\V_i\cap \vp_i^{-1}(Q_{i,j}) }.
$$
We now set
$$
\a_{ij}:=\int_{\V_i\cap \vp_i^{-1}(Q_{i,j}) }d\mu,\qquad 
\mu_{ij}:=\frac{\mu \one_{\V_i\cap \vp_i^{-1}(Q_{i,j}) }}{\a_{ij}},
$$
so that
$$
\mu=\sum_{ij}\a_{ij}\,\mu_{ij},\qquad \int_{\M}d\mu_{ij}=1,\qquad \supp (\mu_{ij})\subset \V_i\cap \vp_i^{-1}(Q_{i,j}),
$$
where, to simplify the notation, in the above formula the indices $i,j$ implicitly run over $i \in I, Q_{i,j}\in \mathcal G_i$. We will keep using this convention also later on.\\

The idea is now the following: by choosing $\delta$ small enough, each measure $\mu_{ij}$ is supported on a
very small set where the metric is essentially constant and allows us to reduce ourselves to the flat case 
and apply Theorem \ref{thm: R^d} to each of these measures.
A ``gluing argument'' then gives the result when
 $\mu=\sum_{ij}\alpha_{ij}\mu_{ij}$ is  compactly supported, $\alpha_{ij}\neq 0$
for at most finitely many indices, and
 $\mu_{ij}$  has constant density on $\vp_i^{-1}(Q_{i,j})$.
Finally, an approximation argument 
yields the result for general compactly supported measures.

\subsection{The local quantization error}

The goal of this section is to understand the behavior of $V_{N,r}(\mu)$
when 
\be
\label{eq:mu 1}
\mu=\lambda \one_{\vp^{-1}(Q)}\,d\vol,
\ee
%d\mu$ is of the form $\lambda \one_{\vp^{-1}(Q)}d\vol$, 
where $\lambda:=\frac{1}{\vol(\vp^{-1}(Q))}$ (so that $\mu$ has mass $1$),
$Q$ is a $\delta$-cube in $\br^d$, $\vp:\W\to \br^d$ is a diffeomorphism defined on a neighborhood
$\W\subset \M$ of $\vp^{-1}(Q)$.

We observe that, in the computation of $V_{N,r}(\mu)$, if the size of the cube is sufficiently small
then we can assume that all the points belong to a $K\delta$-neighborhood of $\vp^{-1}(Q)$,
with $K$ a large universal constant, that we denote by $\mathcal Z_{K\delta}$.
Indeed, if $\dist(b,\vp^{-1}(Q))>K\delta$ then
$$
\dist(x,b)>\dist(x,y)\qquad \forall\,x,y \in \vp^{-1}(Q),
$$
which implies that, in the definition of $V_{N,r}(\mu)$, it is better to substitute $b$ with an arbitrary point inside $\vp^{-1}(Q)$.
Notice also that, if $\delta$ is small enough, $\mathcal Z_{K\delta}$ will be contained in the chart $\W$.

Hence, denoting by $\beta$ a family of $N$ points inside a $\mathcal Z_{K\delta}$, and by $\alpha$
a family of $N$ points inside $\vp(\mathcal Z_{K\delta})$, we have
\be
\label{eq:VNcube}
\begin{split}
V_{N,r}(\mu)&=\inf_\beta \int_{\vp^{-1}(Q)} \min_{b \in \beta}d(y,b)^r\,d\mu(y)\\
&=\lambda\,\inf_\beta \int_{\vp^{-1}(Q)} \min_{b \in \beta}d(y,b)^r\,d\vol(y)\\
&=\lambda \,\inf_\beta \int_{Q} \min_{a \in \alpha}d\bigl(\vp^{-1}(x),\vp^{-1}(a)\bigr)^r\,\sqrt{\det\,g_{k\ell}(x)}\,dx.
\end{split}
\ee

We now begin by showing that $d\bigl(\vp^{-1}(x),\vp^{-1}(a)\bigr)$ can be approximated with a constant metric.
Recall that $\delta$ denotes the size of the cube $Q$.
Also, we use the notation $g_{k\ell}$ to denote the metric in the chart, that is
\be
\label{eq:metric}
\sum_{k\ell}g_{k\ell}(x)v^kv^\ell:=g_{\vp^{-1}(x)}\Bigl(d\vp^{-1}(x)[v],d\vp^{-1}(x)[v]\Bigr),\qquad \forall\,x \in \vp(\W),\,v \in \br^d.
\ee
\begin{lem}
\label{lem:metric}
Let $p$ be the center of the cube $Q$ and let
$A$ be the matrix with entries $A_{k\ell}:=g_{k\ell}(p)$.
There exists a universal constant $\hat C$ such that, for all $x \in Q$ and $a \in \vp(\mathcal Z_{K\delta})$, it holds
$$
(1-\hat C\delta)\,\<A(x-a),x-a\> \leq d\bigl(\vp^{-1}(x),\vp^{-1}(a)\bigr)^2 \leq (1+\hat C\delta)\,\<A(x-a),x-a\>.
$$
\end{lem}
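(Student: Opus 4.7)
The plan is to exploit the fact that, on an $O(\delta)$-neighborhood of the center $p$ of $Q$, the metric components $g_{k\ell}$ differ from their values $A_{k\ell}=g_{k\ell}(p)$ by at most $O(\delta)$, so that the Riemannian length of any curve contained in that neighborhood is well approximated by its length computed with respect to the constant metric $A$. Once this is established, the statement reduces to the elementary fact that straight segments minimize length for a constant positive definite metric.

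\textbf{Step 1 (smallness of the metric variation).} Since $Q$ is a $\delta$-cube centered at $p$ and the chart $\vp$ has uniformly bounded differential on a fixed compact piece of $\W$, both $Q$ and $\vp(\mc Z_{K\delta})$ lie in a Euclidean ball of radius $O(K\delta)$ around $p$. By smoothness of the $g_{k\ell}$ on $\vp(\W)$ and positive definiteness of $A$, I would produce a constant $\hat C$, depending only on the chart and on $K$, such that
$$
(1-\hat C\delta)\<Av,v\>\,\le\,\sum_{k\ell}g_{k\ell}(y)\,v^k v^\ell\,\le\,(1+\hat C\delta)\<Av,v\>
$$
for every $v\in\br^d$ and every $y$ in that ball.

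\textbf{Step 2 (upper bound).} I would test the distance against the image under $\vp^{-1}$ of the Euclidean segment $\gamma(t):=(1-t)x+ta$, which stays in the good region provided $\delta$ is small. Using \eqref{eq:metric} and Step 1 pointwise in $t$,
$$
d(\vp^{-1}(x),\vp^{-1}(a))\le \int_0^1\sqrt{\sum_{k\ell}g_{k\ell}(\gamma(t))(a-x)^k(a-x)^\ell}\,dt\le \sqrt{(1+\hat C\delta)\<A(x-a),x-a\>}.
$$
Squaring yields the upper inequality.

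\textbf{Step 3 (lower bound).} For the reverse inequality, I would argue that every competitor $\eta$ from $\vp^{-1}(x)$ to $\vp^{-1}(a)$ of nearly minimal length must stay inside the neighborhood of Step 1. Reading such a curve in the chart as $\tilde\eta:=\vp\circ\eta$ and applying the lower bound of Step 1 gives
$$
L(\eta)\ge \sqrt{1-\hat C\delta}\,L_A(\tilde\eta)\ge \sqrt{(1-\hat C\delta)\<A(x-a),x-a\>},
$$
where in the last inequality I use that straight segments minimize length for the constant metric $A$. Taking the infimum over $\eta$ and squaring yields the left inequality.

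\textbf{Main obstacle.} The only genuinely delicate point is justifying the confinement of minimizers used in Step 3. I would handle it by combining the upper bound obtained in Step 2, which already forces $d(\vp^{-1}(x),\vp^{-1}(a))=O(\delta)$, with the positive uniform lower bound on the injectivity radius available on compact neighborhoods of $p$: for $\delta$ small enough, any distance-realizing geodesic between two points at mutual distance $O(\delta)$ is unique and confined to an $O(\delta)$-ball around $p$, hence to the region where Step 1 holds. Curves that leave this region have length bounded below by a fixed positive constant and are immediately disqualified, so the infimum in Step 3 is effectively taken over admissible curves.
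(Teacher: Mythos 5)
Your proposal is correct and follows essentially the same approach as the paper: establish a $(1\pm\hat C\delta)$ pointwise comparison between $g_{k\ell}$ and the frozen metric $A$ near $p$, obtain the upper bound by testing against the Euclidean segment, and for the lower bound confine competitors to the region where the comparison holds before invoking the minimality of straight segments for the constant metric $A$. The only minor difference is in the confinement step: the paper avoids the injectivity-radius argument and instead observes directly that the minimizing geodesic (which exists by completeness and has constant speed equal to the distance) has speed $\le C''\delta$ by the segment upper bound, so it trivially stays in the $C''\delta$-neighborhood of its endpoints; this is essentially your alternative observation that curves leaving that neighborhood are too long to compete.
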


\begin{proof}
We begin by recalling that
\footnote{Recall that there are two equivalent definition of the distance between two points:
$$
d(x,y)=\inf_{\substack{ \gamma(0)=x, \\ \gamma(1)=y}}
\int_0^1 \sqrt{g_{\gamma(t)}\bigl( \dot\gamma(t),\dot\gamma(t)\bigr)}\,dt=\inf_{\substack{ \gamma(0)=x, \\ \gamma(1)=y}}\sqrt{
\int_0^1 g_{\gamma(t)}\bigl( \dot\gamma(t),\dot\gamma(t)\bigr)\,dt}.
$$
In this paper we will make use of both definitions.
}
$$
d\bigl(\vp^{-1}(x),\vp^{-1}(a)\bigr)^2=\inf_{\substack{ \gamma(0)=\vp^{-1}(x), \\ \gamma(1)=\vp^{-1}(a)}}
\int_0^1 g_{\gamma(t)}\bigl( \dot\gamma(t),\dot\gamma(t)\bigr)\,dt.
$$
Let $\bar\gamma:[0,1]\to \M$ denote a minimizing geodesic.\footnote{Notice that the hypothesis of completeness on $\M$ ensures the existence of minimizing geodesics.}
Then the speed of $\bar\gamma$ is constant and equal to the distance between the two points, that is
\be
\label{eq:dot gamma}
\|\dot{\bar\gamma}(t)\|_{g}:=\sqrt{g_{\bar\gamma(t)}\bigl( \dot{\bar\gamma}(t),\dot{\bar\gamma}(t)\bigr)}=d\bigl(\vp^{-1}(x),\vp^{-1}(a)\bigr).
\ee
We can bound from above $d\bigl(\vp^{-1}(x),\vp^{-1}(a)\bigr)$ by choosing a curve $\gamma$ obtained
by the image via $\vp^{-1}$ of a segment:
$$
d\bigl(\vp^{-1}(x),\vp^{-1}(a)\bigr)^2\leq \int_0^1 g_{\sigma(t)}\bigl( \dot\sigma(t),\dot\s(t)\bigr)\,dt,
\qquad \sigma(t):=\vp^{-1}\bigl((1-t)x+ta\bigr).
$$
Observe that this formula makes sense since $(1-t)x+ta \in \vp(\W)$ provided $\delta$ is sufficiently small.

Since
\be
\label{eq:dist}
\sqrt{\int_0^1 g_{\sigma(t)}\bigl( \dot\sigma(t),\dot\s(t)\bigr)\,dt} \leq C'|x-a|
\ee
for some universal constant $C'$,
combining \eqref{eq:dot gamma} and \eqref{eq:dist}
we deduce that
$$
\|\dot{\bar\gamma}(t)\|_{g} \leq C'|x-a| \leq C''\delta \qquad\forall\,t \in [0,1].
$$
In particular 
$$
d(\bar\gamma(t),x)=d(\bar\gamma(t),\bar\g(0)) \leq C''\delta\qquad \text{for all $t \in [0,1]$,}
$$
which implies that 
 $\bar\gamma$ belongs to the $K\delta$-neighborhood of $\vp^{-1}(Q)$, that is $\bar\gamma \subset \mathcal Z_{C''\delta}$.

Thanks to this fact we deduce that in the definition of the distance we can use only curves contained inside $\mathcal Z_{C''\delta}$.
Since $\mathcal Z_{C''\delta}\subset \W$ for $\delta$ sufficiently small, all such curves can be seen as the image through $\vp^{-1}$
of a curve contained inside $\vp(\W)\subset \br^d$.
Notice that, by \eqref{eq:metric}, if 
$$
\sigma(t):=\vp(\gamma(t))=\bigl(\sigma^1(t),\ldots,\sigma^n(t)\bigr) \in \br^d
$$
then
$$
g_{\gamma(t)}\bigl( \dot\gamma(t),\dot\gamma(t)\bigr)
=\sum_{k\ell} g_{k\ell}(\sigma(t))\dot\sigma^k(t)\dot\sigma^\ell(t),
$$
therefore
\begin{align*}
d\bigl(\vp^{-1}(x),\vp^{-1}(a)\bigr)^2&=\inf_{\substack{\gamma(0)=\vp^{-1}(x),\\ \gamma(1)=\vp^{-1}(a)}}\int_0^1 g_{\gamma(t)}\bigl( \dot\gamma(t),\dot\gamma(t)\bigr)\,dt\\
&=\inf_{\substack{\gamma(0)=\vp^{-1}(x),\\ \gamma(1)=\vp^{-1}(a),\\ \gamma \subset \mathcal Z_{C''\delta}}}
\int_0^1 g_{\gamma(t)}\bigl( \dot\gamma(t),\dot\gamma(t)\bigr)\,dt\\
&=\inf_{\substack{\sigma(0)=x,\, \sigma(1)=a,\\ \sigma\subset \vp(\mathcal Z_{C''\delta})}}
\int_0^1 \sum_{k\ell} g_{k\ell}(\sigma(t))\dot\sigma^k(t)\dot\sigma^\ell(t)\,dt\\
&\leq \bigl(1+\hat C \delta\bigr) \inf_{\substack{\sigma(0)=x,\, \sigma(1)=a,\\ \sigma\subset \vp(\mathcal Z_{C''\delta})}}
\int_0^1 \sum_{k\ell} A_{k\ell}\dot\sigma^k(t)\dot\sigma^\ell(t)\,dt,
\end{align*}
where in the last inequality we used that,
 by the Lipschitz regularity of the metric
and the fact that $g_{k\ell}$ is positive definite, we have
$$
\sum_{k\ell}g_{k\ell}(z)v^kv^\ell  \leq (1+\hat C\delta) \sum_{k\ell}A_{k\ell}v^kv^\ell \qquad \forall\,z  \in \vp(\mathcal Z_{C''\delta}),\qquad \forall\,v \in \br^d.
$$
Using now that the minimizer for the problem
$$
\inf_{\sigma(0)=x,\,\sigma(1)=a}
\int_0^1 \sum_{k\ell} A_{k\ell}\dot\sigma^k(t)\dot\sigma^\ell(t)\,dt
$$
is given by a straight segment, and since this segment is contained inside  $\vp(\mathcal Z_{C''\delta})$, we obtain 
$$
\inf_{\substack{\sigma(0)=x,\, \sigma(1)=a,\\ \sigma\subset \vp(\mathcal Z_{C''\delta})}}
\int_0^1 \sum_{k\ell} A_{k\ell}\dot\sigma^k(t)\dot\sigma^\ell(t)\,dt=\<A(x-a),x-a\>,
$$
which proves 
$$
d\bigl(\vp^{-1}(x),\vp^{-1}(a)\bigr)^2 \leq \bigl(1+\hat C \delta\bigr)\<A(x-a),x-a\>.
$$
The lower bound is proved analogously using that
$$
\sum_{k\ell}g_{k\ell}(z)v^kv^\ell  \geq (1-\hat C\delta) \sum_{k\ell}A_{k\ell}v^kv^\ell \qquad \forall\,z  \in \vp(\mathcal Z_{C''\delta}),\qquad \forall\,v \in \br^d,
$$
concluding the proof.
\end{proof}

Applying now this lemma, we can estimate $V_{N,r}(\mu)$ both from above and below.
Since the argument in both cases is completely analogous, we just prove the upper bound.

Notice that, by the Lipschitz regularity of the metric
and the fact that $\det g_{k\ell}$ is bounded away from zero, we have
$$
\sqrt{\det\,g_{k\ell}(x)} \leq (1+C\delta) \sqrt{\det\,g_{k\ell}(p)}=(1+C\delta) \sqrt{\det\,A}\qquad \forall\,x \in Q.
$$
Combining this estimate with  \eqref{eq:VNcube} and Lemma \ref{lem:metric}, we get
\begin{align*}
V_{N,r}(\mu) &\leq (1+C'\delta) \,\lambda \inf_\alpha \int_Q \min_{a \in \alpha}  \<A(x-a),x-a\>^{r/2} \,\sqrt{\det\,A} \,dx\\
&=(1+C'\delta) \,\lambda \inf_\alpha \int_{A^{1/2}(Q)} \min_{a \in \alpha}  |z-a|^{r} \,dz,
\end{align*}
where $|\cdot|$ denotes the Euclidean norm.

We now apply Theorem \ref{thm: R^d} to the probability measure $\frac{1}{|{A^{1/2}(Q)}|}\one_{A^{1/2}(Q)}\,dz$ to get
\begin{align*}
&\limsup_{N\to \infty}N^{r/d}V_{N,r}(\mu)\\
& \leq (1+C'\delta)\,\lambda\,Q_r\bigl([0,1]^d\bigr)\,\biggl\| \frac{1}{|{A^{1/2}(Q)}|}\one_{A^{1/2}(Q)}\,dz\biggr\|_{L^{d/(d+r)}}\,|{A^{1/2}(Q)}|\\
&=(1+C'\delta)\,\lambda\,Q_r\bigl([0,1]^d\bigr)\,|{A^{1/2}(Q)}|^{(d+r)/d}.
\end{align*}
Observing that
\begin{multline*}
|A^{1/2}(Q)| =\int_Q \sqrt{\det A}\,dx \leq (1+C\delta)\int_Q \sqrt{\det g_{k\ell}(x)}\,dx \\= (1+C\delta)\,\vol(\vp^{-1}(Q)) =(1+C\delta)\, \frac1\lambda,
\end{multline*}
we conclude that
\be
\label{eq:local up}
\limsup_{N\to \infty}N^{r/d}V_{N,r}(\mu) \leq \bigl(1+\bar C \,\delta\bigr)\,Q_r\bigl([0,1]^d\bigr)\,\vol(\vp^{-1}(Q))^{r/d}.
\ee
Arguing similarly for the lower bound, we also have
\be
\label{eq:local down}
\liminf_{N\to \infty}N^{r/d}V_{N,r}(\mu)
\geq (1-\bar C\delta)\,Q_r\bigl([0,1]^d\bigr)\,\vol(\vp^{-1}(Q))^{r/d},
\ee
which concludes the local analysis of the quantization error
for $\mu$ as in \eqref{eq:mu 1}.\\

In the next two sections we will apply these bounds to study $V_{N,r}(\mu)$
for measures of the form  $\mu=\sum_{ij}\alpha_{ij}\mu_{ij}$ where $\alpha_{ij}\neq 0$
for at most finitely many indices, and
 $\mu_{ij}$  has constant density on $\vp_i^{-1}(Q_{i,j})$.

\subsection{Upper bound for $V_{N,r}$}

We consider a compactly supported measure $\mu=\sum_{ij}\alpha_{ij}\mu_{ij}$ where $\alpha_{ij}\neq 0$
for at most finitely many indices, and
 $\mu_{ij}$  is of the form
$\lambda_{ij}\one_{\vp_i^{-1}(Q_{i,j})}d\,\vol$ with 
$$
\vp_i^{-1}(Q_{i,j}) \cap \vp_{i'}^{-1}(Q_{j'}) = \emptyset, \qquad \forall\, i, i',\quad  \forall\,  j \neq j',
$$ 
and $\lambda_{ij}:=\frac{1}{\vol(\vp_i^{-1}(Q_{i,j}))}$ (so that each measure $\mu_{ij}$ has mass $1$).

To estimate $V_{N,r}(\mu)$ we first observe that, for any choice of $N_{ij}$ such that $\sum_{ij}N_{ij}\leq N$ 
the following inequality holds:
$$
V_{N,r}(\mu)\leq 
\sum_{ij}\a_{ij}\,V_{N_{ij},r}(\mu_{ij}).
$$
Indeed, if for any $i,j$ we consider a family of $N_{ij}$ points $\beta_{ij}$
which is optimal for $V_{N_{ij},r}(\mu_{ij})$, the family $\beta:=\cup_{ij}\beta_{ij}$
is an admissible competitor for $V_{N,r}(\mu)$, hence
\begin{align*}
V_{N,r}(\mu)&\leq \int_{\M} \min_{b \in \beta} d(x,b)^r\,d\mu
= \sum_{ij} \a_{ij} \int_{\M}\min_{b \in \beta} d(x,b)^r\,d\mu_{ij}\\
&\leq \sum_{ij} \a_{ij} \int_{\M}\min_{b \in \beta_{ij}} d(x,b)^r\,d\mu_{ij}
=\sum_{ij}\a_{ij}\,V_{N_{ij},r}(\mu_{ij}).
\end{align*}
We want to chose the $N_{ij}$ in an optimal way. As it will be clear from the estimates below, the best choice is to set
\footnote{Notice that, if we were on $\br^d$ and $\vp_i$ are just the identity map,
then the formula for $t_{ij}$ simplifies to 
$$
t_{ij}=\frac{\left(\alpha_{ij}\right)^{d/(d+r)}}
{\sum_{k\ell}\left(\alpha_{k\ell}\right)^{d/(d+r)}},
$$
that is the exact same formula used in \cite[Proof of Theorem 6.2, Step 2]{GL}.
}
$$
t_{ij}:=\frac{\left(\alpha_{ij}\,\vol(\vp_i^{-1}(Q_{i,j}))^{r/d}\right)^{d/(d+r)}}
{\sum_{k\ell}\left(\alpha_{k\ell}\,\vol(\vp_k^{-1}(Q_\ell))^{r/d}\right)^{d/(d+r)}},
$$
and define
$$
N_{ij}:=[t_{ij}N].
$$
Notice that $N_{ij}$ satisfy $\sum_{ij}N_{ij}\leq N$ and
$$
\sum_{ij}\frac{N_{ij}}{N} \to 1 \qquad \text{as $N\to \infty$}.
$$
We observe that each measure $\mu_{ij}$ is a probability measure supported in only one ``cube'' with constant density. Hence we can apply the local quantization error \eqref{eq:local up} to each measure $\mu_{ij}$ to get that
$$
\limsup_{N_{ij} \to \infty}N_{ij}^{r/d}V_{N_{ij},r}(\mu_{ij}) \leq (1+\bar C\delta)\,
Q_r\bigl([0,1]^d\bigr)\,\vol(\vp_i^{-1}(Q_{i,j}))^{r/d}.
$$
Recalling our choice of $N_{ij}$,
\begin{align*}
\limsup_{N\to \infty} N^{r/d}V_{N,r}(\mu) 
&\leq \limsup_{N\to \infty} \sum_{ij}\a_{ij}\,\biggl(\frac{N}{N_{ij}} \biggr)^{r/d}\,N_{ij}^{r/d}V_{N_{ij},r}(\mu_{ij})\\
&\leq (1+\bar C\delta)\,Q_r\bigl([0,1]^d\bigr)\,\sum_{ij}\a_{ij}t_{ij}^{-r/d}\,\vol(\vp_i^{-1}(Q_{i,j}))^{r/d},
\end{align*}
and observing that 
$$
\sum_{ij}\a_{ij}\,t_{ij}^{-r/d}\,\vol(\vp_i^{-1}(Q_{i,j}))^{r/d}=\biggl(\int_\M\hh^{d/(d+r)}\,d\vol\biggr)^{(d+r)/d},
$$
we get
$$
\limsup_{N\to \infty} N^{r/d}V_{N,r}(\mu) \leq 
(1+\bar C\delta)\,Q_r\bigl([0,1]^d\bigr)\,\biggl(\int_\M \hh^{d/(d+r)}\,d\vol\biggr)^{(d+r)/d}.
$$

\subsection{Lower bound for $V_{N,r}$}
We consider again a compactly supported measure $\mu=\sum_{ij}\alpha_{ij}\mu_{ij}$
where $\alpha_{ij}\neq 0$
for at most finitely many indices, and
 $\mu_{ij}$  is of the form
$\lambda_{ij}\one_{\vp_i^{-1}(Q_{i,j})}d\,\vol$
 with 
$$
\vp_i^{-1}(Q_{i,j}) \cap \vp_{i'}^{-1}(Q_{j'}) = \emptyset, \qquad \forall\, i, i',\quad  \forall\,  j \neq j',
$$ 
and $\lambda_{ij}:=\frac{1}{\vol(\vp_i^{-1}(Q_{i,j}))}$ (so that $\int_{\M}\mu_{ij}=1$).
Fix $\e>0$ with $\e \ll \delta$,  and consider the cubes $Q_{j,\e}$ given by 
$$
Q_{j,\e}:=\{y \in Q_{i,j}:\dist(y, \partial Q_{i,j}) >\e\}.
$$
Also, consider a set $\gamma_{ij}$ consisting of $K_{ij}$ points such that 
$$
\min_{a \in \gamma_{ij}} d(x,a)\leq \inf_{z \in \M\setminus \vp_i^{-1}(Q_{i,j})} d(x,z) \qquad \forall\,x \in \vp_i^{-1}(Q_{j,\e})\text { s.t. $\vp_i^{-1}(Q_{i,j})\cap \supp(\mu)\neq\emptyset$.}
$$
Notice that the property of $\mu$ being compactly supported ensures that
$$
K:=\max \biggl\{K_{ij}: \vp_i^{-1}(Q_{i,j})\cap \supp(\mu)\neq\emptyset \biggr\}<\infty
$$
Then, if $\beta$ is a set of $N$ points optimal for $V_{N,r}(\mu)$ and $\beta_{ij}:=\beta \cap 
\vp_i^{-1}(Q_{j})$,
\be
\label{eq:lower}
\begin{split}
V_{N,r}(\mu)&
=  \int_{\M}\min_{b \in \beta} d(x,b)^r\,d\mu\\
&\geq \sum_{ij} \int_{\vp_i^{-1}(Q_{j,\e})}\min_{b \in \beta \cup \gamma_{ij}} d(x,b)^r\,d\mu\\
&= \sum_{ij} \int_{\vp_i^{-1}(Q_{j,\e})}\min_{b \in \beta_{ij} \cup \gamma_{ij}} d(x,b)^r\,d\mu\\
&=\sum_{ij}\a_{ij}^\e \int_{\vp_i^{-1}(Q_{j,\e})}\min_{b \in \beta_{ij} \cup \gamma_{ij}} d(x,b)^r\,d\mu_{ij}^\e\\
&\geq \sum_{ij}\a_{ij}^\e\,V_{N_{ij}+K_{ij},r}(\mu_{ij}^\e),
\end{split}
\ee
where 
$$
\a_{ij}^\e:=\int_{\V_i\cap \vp_i^{-1}(Q_{j,\e}) }d\mu,\qquad 
\mu_{ij}^\e:=\frac{\one_{\V_i\cap \vp_i^{-1}(Q_{j,\e}) }\,d\vol}{\vol\bigl(\vp_i^{-1}(Q_{j,\e}) \bigr)}, \qquad N_{ij}:=\# \beta_{ij}.
$$
We notice that $\a_{ij}^\e\to \a_{ij}$ as $\e \to 0$.

Let $L:=\liminf_{N \to \infty}N^{r/d}V_{N,r}(\mu)$. Notice that $L<\infty$ by the upper bound proved in the previous step.
Choose a subsequence $N(k)$ such that
$$
N(k)^{r/d}V_{N(k),r}(\mu) \to L\qquad \text{as $k\to \infty$}
$$
and, for all $i,j$,
$$
\frac{N_{ij}(k)}{N(k)} \to v_{ij} \in [0,1] \qquad \text{as $k\to \infty$}
$$
Since $\sum_{ij}N_{ij}(k)=N(k)$ we have $\sum_{ij}v_{ij}=1$. 

Moreover $N_{ij}(k) \to \infty$ for every $i,j$.
Indeed, if not,  there would exists $\bar i,\bar j$ such that $N_{\bar i\bar j}(k)+K_{\bar i\bar j}(k)$ would be bounded by a number $M$. 
Hence, since one cannot approximate the absolutely continuous
measure $\mu_{ij}^\e$ only with a finite number $M$ of points, it follows that 
$$
c_0:=V_{M,r}(\mu_{ij}^\e) >0,
$$
that implies in particular 
$$
V_{N_{\bar i\bar j}(k)+K_{\bar i\bar j}(k),r}(\mu_{ij}^\e) \geq c_0>0 \qquad \forall\,k \in \mathbb N
$$
(since $N_{\bar i\bar j}(k)+K_{\bar i\bar j}(k) \leq M$).
This
is impossible as \eqref{eq:lower} would give
$$
L=\lim_{k\to \infty}N(k)^{r/d}V_{N(k),r}(\mu)\geq \lim_{k\to \infty}N(k)^{r/d} \a_{\bar i\bar j}^\e\, c_0=\infty,
$$
which contradicts the finiteness of $L$.

Thanks to this fact, we can now apply the local quantization error \eqref{eq:local down} to deduce that
\begin{multline*}
\liminf_{k \to \infty}N_{ij}(k)^{r/d}V_{N_{ij}(k)+K_{ij}(k),r}(\mu_{ij}^\e)\\
=\liminf_{k \to \infty}\bigl(N_{ij}(k)+K_{ij}(k)\bigr)^{r/d}V_{N_{ij}(k)+K_{ij}(k),r}(\mu_{ij}^\e)\\
\geq (1-\bar C\delta)\,Q_r\bigl([0,1]^d\bigr)\,\vol(\vp_i^{-1}(Q_{j,\e}))^{r/d},
\end{multline*}
which implies that (recalling \eqref{eq:lower})
$$
L \geq (1-\bar C\delta)\,Q_r\bigl([0,1]^d\bigr)\,\sum_{ij}\alpha_{ij}^\e\,v_{ij}^{-r/d}\vol(\vp_i^{-1}(Q_{j,\e}))^{r/d}.
$$
Letting $\e \to 0$ this gives
$$
L \geq (1-\bar C\delta)\,Q_r\bigl([0,1]^d\bigr)\,\sum_{ij}\alpha_{ij}\,v_{ij}^{-r/d}\vol(\vp_i^{-1}(Q_{j}))^{r/d},
$$
and applying \cite[Lemma 6.8]{GL} we finally obtain
\begin{align*}
L &\geq (1-\bar C\delta)\,Q_r\bigl([0,1]^d\bigr)\,\sum_{ij}\Bigl(\alpha_{ij}\vol(\vp_i^{-1}(Q_{j}))^{r/d} \Bigr)\,v_{ij}^{-r/d}\\
&\geq (1-\bar C\delta)\,Q_r\bigl([0,1]^d\bigr)\,\biggl(\sum_{ij}\Bigl(\alpha_{ij}\vol(\vp_i^{-1}(Q_{j}))^{r/d} \Bigr)^{d/(d+r)}\biggr)^{(d+r)/d}\\
&=(1-\bar C\delta)\,Q_r\bigl([0,1]^d\bigr)\,\biggl(\int_{\M}\hh^{d/(d+r)}\,d\vol\biggr)^{(d+r)/d}.
\end{align*}

\subsection{Approximation argument: general compactly supported measures}
In the previous two sections we proved that if $\mu$ is compactly supported and it is of the form
$$
\mu=\sum_{ij}\alpha_{ij}\frac{\one_{\vp_i^{-1}(Q_{i,j})}}{\vol\bigl(\vp_i^{-1}(Q_{i,j})\bigr)}\,d\,\vol
$$
where $Q_{i,j}$ is a family of cubes in $\br^d$  of size at most $\delta$ and $\alpha_{ij}\ne 0$ for finitely many indices, then
\begin{multline}
\label{eq:up low}
(1-\bar C\delta)\,Q_r\bigl([0,1]^d\bigr)\,\biggl(\int_{\M}\hh^{d/(d+r)}\,d\vol\biggr)^{(d+r)/d}
\leq \liminf_{N\to\infty} N^{r/d}V_{N,r}(\mu)\\
\leq \limsup_{N\to\infty} N^{r/d}V_{N,r}(\mu)
\leq (1+\bar C\delta)\,Q_r\bigl([0,1]^d\bigr)\,\biggl(\int_{\M}\hh^{d/(d+r)}\,d\vol\biggr)^{(d+r)/d}.
\end{multline}
To prove the quantization result for general measures with compact support, we need three approximation steps.

First, given a compactly supported measure $\mu=\hh\,d\vol$, we can approximate it with a sequence $\{\mu_k\}_{k \in \mathbb N}$ of measures as above where the size of the cubes $\delta_k \to 0$,
and this allows us to prove that
\be
\label{eq:main}
N^{r/d}V_{N,r}(\mu) \to Q_r\bigl([0,1]^d\bigr)\,\biggl(\int_{\M}\hh^{d/(d+r)}\,d\vol\biggr)^{(d+r)/d}
\ee
for any compactly supported measure of the form $\hh\,d\vol$.
%Then, by a second approximation argument we can remove the assumption of compact support. (MAGARI!)
Then, given a singular measure with compact support $\mu=\mu^s,$ we show that 
$$
N^{r/d}V_{N,r}(\mu) \to 0.
$$
Finally, given an arbitrary measure with compact support $\mu=\hh\,d\vol+\mu^s$, we show that \eqref{eq:main} still holds true.

The proofs of these three steps is performed in detail in  \cite[Theorem 6.2, Step 3, Step 4, Step 5]{GL} for the case of $\br^d$.
As it can be easily checked, such a proof applies immediately also in our case, so we will not repeat here
for the sake of conciseness.

This concludes the proof of Theorem \ref{thm:main} when $\mu$ is compactly supported
(in particular, whenever $\M$ is compact).

\section{Proof of Theorem \ref{thm:main}: the non-compact case}
\label{sec:non-compact}

The aim of this section is to study the case of non-compactly supported measures. As we shall see, this situation is very different with respect to the flat case as we need to deal with the growth at infinity of $\mu$.
%In this section we prove a positive result for non-compactly supported measures on a non compact Riemannian manifold whose metric in polar coordinates is of the form $d\,\rho^2+a(\rho, \theta)^2d\,\theta^2.$

To state our result, let us recall the notation we already presented in the introduction:
given a point $x_0 \in \M$, we can 
consider polar coordinates $(\r,\vte)$ on $T_{x_0}\M\simeq \br^d$ induced by the constant metric $g_{x_0}$,
where $\vartheta$ denotes a vector on the unit sphere $\S^{d-1}$.
Then we define the quantity $A_{x_0}(\rho)$
as in \eqref{eq:Arho}. 
Our goal is to prove the following result which implies Theorem \ref{thm:main}.
\begin{thm}\label{thm:noncpt}
Let $(\M,g)$ be a complete Riemannian manifold, and 
 let $\mu=\hh \,d\vol+\mu^s$ be a probability measure on $\M$. Then, for any $x_0 \in \M$ and $\delta>0$,
 there exists a constant $C=C(\delta)>0$ such that
\be
\label{eq:bound VNr}
N^r V_{N^d,r}(\mu)\le C \biggl(1+ \int_{\M} d(x,x_0)^{r+\delta}\,d\mu(x)+  \int_{\M} A_{x_0}\bigl({d(x,x_0)}\bigr)^r  \,d\mu(x)\biggr).
\ee
In particular, if there exists a point $x_0 \in \M$ and $\delta>0$ for which the right hand side is finite, we have
\be
\label{eq:quantiz noncpt}
N^{r/d}V_{N,r}(\mu) \to Q_r\bigl([0,1]^d\bigr)\,\biggl(\int_{\M}\hh^{d/(d+r)}\,d\vol\biggr)^{(d+r)/d}.
\ee
\end{thm}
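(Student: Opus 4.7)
The plan is to first establish the quantitative bound \eqref{eq:bound VNr} by an explicit construction of $N^d$ points, and then to deduce the asymptotic identity \eqref{eq:quantiz noncpt} by combining this bound (applied to the tail of $\mu$) with the compact case of Theorem \ref{thm:main} (applied to the bulk).

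For the quantitative bound I would decompose $\M$ as a disjoint union of annuli $A_k := B_{R_{k+1}}(x_0) \setminus B_{R_k}(x_0)$, $k\geq 0$, where the radii $\{R_k\}$ are chosen adaptively so that the ratio $A_{x_0}(R_{k+1})/A_{x_0}(R_k)$ is uniformly bounded; this yields dyadic spacing in the Euclidean-like case and essentially linear spacing in geometries such as $\H^d$. On each $A_k$ I would place a quasi-grid of $N_k$ points by pushing forward via $\exp_{x_0}$ a regular Euclidean grid on the corresponding annulus in $T_{x_0}\M\simeq \br^d$. Since $\exp_{x_0}$ is surjective (by completeness) and its differential is controlled by $A_{x_0}$, the resulting net has mesh $\leq C\,A_{x_0}(R_{k+1})/N_k^{1/d}$ in $\M$, so
\[
V_{N_k,r}(\mu|_{A_k}) \leq C\,\mu(A_k)\,A_{x_0}(R_{k+1})^r / N_k^{r/d}.
\]
Summing over $k$ under $\sum_k N_k \leq N^d$ and choosing $N_k \propto (\mu(A_k)A_{x_0}(R_{k+1})^r)^{d/(d+r)}$ (H\"older-optimal), I obtain
\[
N^r V_{N^d,r}(\mu) \leq C \Bigl( \sum_k \mu(A_k)^{d/(d+r)} A_{x_0}(R_{k+1})^{rd/(d+r)} \Bigr)^{(d+r)/d}.
\]
To bound this sum by $C(1+M_{r+\delta}+I_A)^{d/(d+r)}$, with $M_{r+\delta}:=\int d(\cdot,x_0)^{r+\delta}\,d\mu$ and $I_A:=\int A_{x_0}(d(\cdot,x_0))^r\,d\mu$, I would combine the two Chebyshev-type estimates $\mu(A_k)\leq M_{r+\delta}/R_k^{r+\delta}$ and $\mu(A_k)\,A_{x_0}(R_k)^r\lesssim \int_{A_k}A_{x_0}(d(\cdot,x_0))^r\,d\mu$ via a geometric mean with an exponent $\lambda\in(0,1/d)$. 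The adaptive choice of $\{R_k\}$ then ensures exponential decay of the resulting summand.

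Given \eqref{eq:bound VNr}, the asymptotic identity \eqref{eq:quantiz noncpt} follows by truncation. The lower bound uses $V_{N,r}(\mu)\geq V_{N,r}(\mu|_{B_R})$: the compact case gives $\liminf_N N^{r/d}V_{N,r}(\mu) \geq Q_r([0,1]^d)\bigl(\int_{B_R}\hh^{d/(d+r)}\,d\vol\bigr)^{(d+r)/d}$, and monotone convergence $R\to\infty$ concludes. For the upper bound, for each $\eta\in(0,1)$ and $R>0$, combining optimal configurations of $N_1:=\lfloor(1-\eta)N\rfloor$ points for $\mu|_{B_R}$ and $N_2:=\lceil\eta N\rceil$ points for $\mu|_{B_R^c}$ yields $V_{N,r}(\mu)\leq V_{N_1,r}(\mu|_{B_R})+V_{N_2,r}(\mu|_{B_R^c})$; the first term is handled by the compact case, and \eqref{eq:bound VNr} applied to the tail gives $N_2^{r/d}V_{N_2,r}(\mu|_{B_R^c})\leq C\,\tau(R)$ with $\tau(R)\to 0$ as $R\to\infty$. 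Letting $N\to\infty$, then $R\to\infty$, and finally $\eta\to 0$ closes the upper bound.

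The principal obstacle is the convergence of the sum in the first step: both hypotheses in \eqref{eq:moment M} are simultaneously needed, with the moment condition controlling the polynomial decay of $\mu$ and the $A_{x_0}$-integrability controlling its interplay with the curvature-dependent volume growth of distant annuli. Making these two bounds interact through a common annular decomposition forces the adaptive choice of $\{R_k\}$ together with the geometric-mean splitting above; this is where the geometry of $\M$ enters essentially, and explains why neither hypothesis can be dispensed with (as demonstrated by the counterexample in Theorem \ref{thm:counter}).
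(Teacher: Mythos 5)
Your plan for deducing \eqref{eq:quantiz noncpt} from \eqref{eq:bound VNr} by splitting $\mu$ into $\mu|_{B_R}$ and $\mu|_{B_R^c}$, handling the bulk with the compact case and the tail with the quantitative bound, is essentially identical to the paper's argument (which invokes \cite[Lemma 6.5(a)]{GL} and dominated convergence). However, your proposed proof of the quantitative bound \eqref{eq:bound VNr} itself has a genuine gap.

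You decompose $\M$ into annuli $A_k$, place an \emph{isotropic} (cubical) Euclidean grid of $N_k$ points on each, take the mesh to be of order $A_{x_0}(R_{k+1})/N_k^{1/d}$, and then optimize $N_k$ by H\"older, arriving at an intermediate bound
$N^r V_{N^d,r}(\mu)\lesssim\bigl(\sum_k (\mu(A_k)A_{x_0}(R_{k+1})^r)^{d/(d+r)}\bigr)^{(d+r)/d}$,
which you then try to dominate by $1+M_{r+\delta}+I_A$ (with $M_{r+\delta}$ the moment and $I_A:=\int A_{x_0}(d(\cdot,x_0))^r\,d\mu$) by a Chebyshev/geometric-mean argument. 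This last step cannot work, because the intermediate quantity can be infinite while $1+M_{r+\delta}+I_A$ is finite. Concretely, on $\H^2$ with $r=2$, $R_k=k$, $A_{x_0}(\rho)=\sinh\rho\approx e^\rho$, take $\mu(A_k)\sim c_0 /(k(\log k)^2 e^{2k})$. Then $M_{2+\delta}<\infty$ and $I_A\approx\sum_k 1/(k(\log k)^2)<\infty$, yet $\sum_k(\mu(A_k)A_{x_0}(R_k)^2)^{1/2}\approx\sum_k 1/(\sqrt{k}\,\log k)=\infty$; even after correcting your mesh to the sharper $A_{x_0}(R_k)R_k^{-1/d}/N_k^{1/d}$ one still gets $\sum_k 1/(k\log k)=\infty$. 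The exponential growth of $A_{x_0}$ simply cannot be tamed by the polynomial decay supplied by the Chebyshev estimate $\mu(A_k)\le M_{r+\delta}/R_k^{r+\delta}$: any geometric-mean split leaves a factor $A_{x_0}(R_k)^{rs\lambda}$ that grows without bound, and summability fails.

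The step you are missing is an \emph{anisotropic} construction separating radial and angular directions, which is exactly what the paper does. One takes $N^{d-1}$ directions $\vte_k$ quasi-uniformly on $\S^{d-1}$ (angular spacing $\sim 1/N$, \emph{uniform in the radius}), crossed with $N$ radii $\rho_i$, and pushes the product $(\rho_i,\vte_k)$ forward by $\exp_{x_0}$. The distance from any $x$ to the net then splits as angular error $\le A_{x_0}(d(x,x_0))\cdot C/N$ plus radial error $|d(x,x_0)-\rho_i|$, and the elementary inequality $(a+b)^r\le 2^{r-1}(a^r+b^r)$ decouples them. The angular part contributes $C^r I_A/N^r$ after integration; the radial part is \emph{exactly} a one-dimensional quantization problem for $(d_{x_0})_\#\mu$, to which \cite[Lemma 6.6]{GL} applies and yields $C(1+M_{r+\delta})/N^r$. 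This is why both hypotheses appear, and why they never need to interact inside a single sum over annuli. Your annular/isotropic construction unavoidably mixes the two scales and produces a quantity that is strictly larger than what the theorem controls; the decoupling is not a technical convenience but the essential idea of the proof.
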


\subsection{Proof of Theorem \ref{thm:noncpt}}
We begin by the proof of \eqref{eq:bound VNr}.
For this we will need the following result, whose proof is contained in \cite[Lemma 6.6]{GL}.
\begin{lem}
\label{lem:1D}
Let $\nu$ be a probability measure on $\br$. Then
\be
\label{eq:finite moment}
N^rV_{N,r}(\nu)\le C \bigg(1+\int_\br |t|^{r+\delta}\,d\nu(t) \bigg).
\ee
\end{lem}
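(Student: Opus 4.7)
\medskip
\noindent\textbf{Proof plan for Lemma \ref{lem:1D}.}
The plan is to produce, for each $N$, an explicit configuration of at most $N$ points on $\br$ whose associated quantization error is bounded by $C(1+M)N^{-r}$, where $M:=\int_\br |t|^{r+\delta}\,d\nu(t)$. The construction is dyadic: I will split the line into the central interval $I_{-1}:=[-1,1]$ and the annuli
$$
A_k:=\{t\in\br\,:\,2^k\le |t|<2^{k+1}\},\qquad k=0,1,2,\ldots,
$$
place $N_k$ equally spaced points inside each of these sets, and optimize the allocation $(N_k)_{k\ge -1}$ subject to the total budget $\sum_{k\ge -1}N_k\le N$.

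\smallskip
For each $k\ge 0$, placing $N_k$ equally spaced points in the positive and negative halves of $A_k$ (of total length $2^{k+1}$) guarantees that every $t\in A_k$ lies within distance $C\,2^{k}/N_k$ of one of these points; similarly $\lceil N/2\rceil$ equally spaced points in $I_{-1}$ cover that interval with spacing $C/N$. Hence the quantization error of this configuration satisfies
$$
V_{N,r}(\nu)\le \frac{C}{N^r}\,\nu(I_{-1}) + C\sum_{k\ge 0}\frac{2^{kr}}{N_k^r}\,\nu(A_k).
$$
By Markov's inequality, $\nu(A_k)\le M\,2^{-k(r+\delta)}$ for every $k\ge 0$ (and of course $\nu(I_{-1})\le 1$), so that
$$
V_{N,r}(\nu)\le \frac{C}{N^r} + C\,M\sum_{k\ge 0}\frac{1}{N_k^r\,2^{k\delta}}.
$$

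\smallskip
Now I choose $N_k:=\lfloor c\,N\,2^{-k\alpha}\rfloor$ for $k\ge 0$, with a fixed exponent $\alpha\in(0,\delta/r)$ and a constant $c>0$ small enough that $\sum_{k\ge 0}c\,2^{-k\alpha}\le 1/2$; this keeps the total number of points within the budget $N$. For the values of $k$ for which $N_k\ge 1$, the corresponding summand is bounded by $C\,N^{-r}\,2^{k(r\alpha-\delta)}$, and the choice $r\alpha<\delta$ makes this a convergent geometric series. For the tail of indices $k$ with $N_k=0$, I bound the contribution of $A_k$ trivially by $2^{kr}\nu(A_k)\le M\,2^{-k\delta}$, which also sums to a bounded multiple of $M\,N^{-r}$ provided one checks that these indices satisfy $k\gtrsim (\log N)/\alpha$, so that the tail starts far enough. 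Collecting all terms yields $V_{N,r}(\nu)\le C(1+M)/N^r$, which is the claim.

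\smallskip
There is no real obstacle; the only delicate point is the bookkeeping at the threshold where $N_k$ drops below $1$, which is handled by the tail estimate above. One should also treat separately a finite range of small $N$, where the bound $V_{N,r}(\nu)\le V_{1,r}(\nu)\le C(1+M)$ (by Jensen applied to $|t|^r\le 1+|t|^{r+\delta}$) gives the result directly.
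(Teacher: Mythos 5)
Your construction is correct: the dyadic annuli $A_k$, the Markov bound $\nu(A_k)\le M\,2^{-k(r+\delta)}$, the allocation $N_k\approx N2^{-k\alpha}$ with $r\alpha<\delta$, and the separate treatment of the tail indices with $N_k=0$ (which indeed start only at $k\gtrsim \log_2 N/\alpha$, so their total contribution $\sum_k M2^{-k\delta}\lesssim M N^{-\delta/\alpha}\le MN^{-r}$) combine to give $V_{N,r}(\nu)\le C(1+M)N^{-r}$, with the small-$N$ range handled by $V_{1,r}(\nu)\le 1+M$. The paper does not prove this lemma itself but quotes it from \cite[Lemma 6.6]{GL}, whose argument is of the same nature (dyadic shells, a moment/Markov estimate, and an allocation of points across scales), so your proposal amounts to a correct, self-contained version of the cited proof.
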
 
To simplify the notation, given $v \in T_{x_0}\M$ we use $|v|_{x_0}$ to denote $\sqrt{g_{x_0}(v,v)}$.\\

In order to construct a family of $N^d$ points on $\M$, we argue as follows:
first of all we consider polar coordinates $(\r,\vte)$ on $T_{x_0}\M\simeq \br^d$ induced by the constant metric $g_{x_0}$,
where $\vartheta$ denotes a vector on the unit sphere $\S^{d-1}$,
and then we 
consider a family of ``radii'' $0<\rho_1< \ldots < \rho_N<\infty$ and a set of $N^{d-1}$ points  $\{\vte_1, \ldots, \vte_{N^{d-1}}\} \subset \S^{d-1}$ distributed in a ``uniform'' way on the sphere so that
\be
\label{eq:uniform}
\underset{k}\min\, d_\te(\vte, \vte_k) \le \frac{C}{N} \qquad \forall\, \vte \in \S^{d-1},
\ee
where $d_\te(\vte, \vte_k)$ denotes the distance on the sphere induced by $g_{x_0}$.

We then define the family of points $p_{i,k}$ on the tangent space $T_{x_0}\M$ that, in polar coordinates, are given by $p_{i,k}:=(\rho_i, \vte_k)$, and
we take the family of points on $\M$ given by
$$
x_{i,k}:=\exp_{x_0}(p_{i,k}) \qquad i=1, \ldots, N; \quad k=1, \ldots, N^{d-1}.
$$
We notice the following estimate: 
given a point $x \in \M$, we consider the vector $p=(\rho,\vte) \in T_{x_0}\M$ defined as $p:=\dot{\gamma}(0)$ where $\gamma:[0,1] \ra \M$ is a constant speed minimizing geodesic. By the definition of the exponential map we notice that $x=\exp_{x_0}(p)$
and $\rho=|p|_{x_0}=d(x,x_0)$.
%\footnote{Indeed, taking $p$ an element of $\exp_{x_0}^{-1}(x)$, the curve $\gamma(t):=\exp_{x_0}(tp)$ 
%is a minimizing geodesic from $x_0$ to $x$, hence
%$$
%d(x,x_0)=\int_0^1\bigl|\dot\g(t)\bigr|_{\gamma(t)} \,dt=|p|_{x_0}.
%$$} 
Then, we  can estimate the distance between $x:=\exp_{x_0}(p)$
and $x_{i,k}$ as follows: first 
we consider $\sigma:[0,1]\to\S^{d-1}\subset T_{x_0}\M$ a geodesic (on the unit sphere) connecting  $\vte$ to $\vte_k$
and we define $\eta:=\exp_{x_0}(\rho \,\sigma)$,
and then we connect $\exp_{x_0}\bigl((\r,\vte_k)\bigr)$ to $x_{i,k}$
considering $\gamma|_{[\rho,\rho_i]}$, where $\gamma(s):=\exp_{x_0}\bigl((s,\vte)\bigr)$ is a unit speed geodesic (see Figure \ref{fig2}). 

\begin{figure}[h]
\centerline{\includegraphics[scale=0.5]{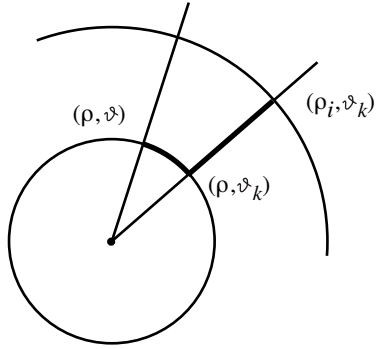}}
\caption{\label{fig2} The bold curve joining $(\r,\vte)$ and $(\r_i,\vte_k)$ provides an upper bound for the distance between the two points.}
\end{figure}
Setting $\eta:=\exp_{x_0}(\rho\,\sigma)$, 
this gives the bound
\begin{align*}
d(x,x_{i,k}) &\leq \int_0^1 \bigl|\dot\eta(t)\bigr|_{\eta(t)}\,dt
+\biggl|\int_{\rho}^{\rho_i}\bigl|\dot\g(s)\bigr|_{\gamma(s)}\,ds\biggr|\\
&= \rho \int_0^1\bigl|d_{\rho \sigma(t)}\exp_{x_0}[\dot\sigma(t)]\bigr|_{\eta(t)}\,dt+|\rho-\rho_i|\\
&\leq A_{x_0}(\rho) \int_0^1\bigl|\dot\sigma(t)\bigr|_{x_0}\,dt+|d(x,x_0)-\rho_i|\\
&=A_{x_0}\bigl({d(x,x_0)}\bigr) \,d_\theta(\vte_k,\vte)+|d(x,x_0)-\rho_i|,
\end{align*}
where $A_{x_0}(\rho)$ is defined in \eqref{eq:Arho},
and we used that $\sigma(t)$ is a geodesic (on the sphere) from $\vte_k$ to $\vte$ and that $\rho=d(x,x_0)$.

Notice that, thanks to the estimate above and by \eqref{eq:uniform}, 
\begin{align*}
 \underset{i,k}\min\, d(x, x_{i,k})^r &\leq 
 \underset{i,k}\min \bigg[ A_{x_0}\bigl({d(x,x_0)}\bigr)\, d_\theta(\vte,\vte_k)+|d(x,x_0)-\rho_i| \bigg]^r\\
& \leq \underset{i}\min \bigg[ A_{x_0}\bigl({d(x,x_0)}\bigr)\, \frac{C}{N}+|d(x,x_0)-\rho_i| \bigg]^r.
\end{align*}
We can now estimate the quantization error:
\begin{align*}
N^r V_{N^{d},r}(\mu)& \le N^r \int_{\M} \underset{i,k}\min\, d(x, x_{i,k})^r\,d\mu(x)\\
&\le N^r \int_{\M}  \underset{i,k}\min \bigg[ A_{x_0}\bigl({d(x,x_0)}\bigr)\, \frac{C}{N}+|d(x,x_0)-\rho_i| \bigg]^r\,d\mu(x).
\end{align*}
Using that $(a+b)^r\le 2^{r-1}(a^r+b^r)$ for $a,b>0$ we get
\begin{align*}
N^r V_{N^{d},r}(\mu)& \le N^r 2^{r-1} \Bigg[\int_{\M}  \underset{i}\min\,|d(x,x_0)-\rho_i|^r\,d\mu(x)+ \int_{\M}A_{x_0}\bigl({d(x,x_0)}\bigr)^r  \biggl(\frac{C}{N}\biggr)^r\,d\mu(x)\Bigg]\\
&=N^r 2^{r-1} \int_\M   \underset{i}\min\,|d(x,x_0)-\rho_i|^r\,d\mu(x)
+ C^r2^{r-1} \int_{\M} A_{x_0}\bigl({d(x,x_0)}\bigr)^r  \,d\mu(x).
\end{align*}
Let us now consider the map $d_{x_0}:\M \to \br$ defined as $d_{x_0}(x):=d(x,x_0)$, and define the probability measure on $\br$
given by $\mu_1:=(d_{x_0})_\#\mu.$ In this way
$$
\int_\M   \underset{i}\min\,|d(x,x_0)-\rho_i|^r\,d\mu(x)
=\int_{\br} \underset{i}\min\,|s-\rho_i|^r\,d\mu_1(s).
$$
We now choose the radii $\rho_i$ to be optimal for the quantization problem in one dimension for $\mu_1$.
Then the above estimate and Lemma \ref{lem:1D} yield
\begin{align*}
N^r V_{N^{d},r}(\mu)& \le N^r 2^{r-1} V_{N,r}(\mu_1)+ C^r2^{r-1} \int_{\M} A_{x_0}\bigl({d(x,x_0)}\bigr)^r  \,d\mu(x),\\
& \le C'\biggl(1+ \int_0^\infty s^{r+\delta}\,d\mu_1(s)+ \int_{\M} A_{x_0}\bigl({d(x,x_0)}\bigr)^r  \,d\mu(x)\biggr)\\
& = C'\biggl(1+ \int_{\M} d(x,x_0)^{r+\delta}\,d\mu(x)+  \int_{\M} A_{x_0}\bigl({d(x,x_0)}\bigr)^r  \,d\mu(x)\biggr),
\end{align*}
that concludes the proof of \eqref{eq:bound VNr}.\\

To show why this bound implies \eqref{eq:quantiz noncpt} (and hence Theorem \ref{thm:main}
in the general non-compact case),
we first notice that by \eqref{eq:bound VNr} it follows that, for any $M \geq 1$,
\begin{multline}
\label{eq:bound VNr 2} 
M^{r/d} V_{M,r}(\mu)\\
 \le
C\biggl(1+ \int_{\M} d(x,x_0)^{r+\delta}\,d\mu(x)+  \int_{\M} A_{x_0}\bigl({d(x,x_0)}\bigr)^r  \,d\mu(x)\biggr).
\end{multline}
Indeed, for any $M \geq 1$ there exists $N \geq 1$ such that $N^d \leq M <(N+1)^d$, hence
(since $V_{M,r}$ is decreasing in $M$)
\begin{multline*}
M^{r/d} V_{M,r}(\mu) \leq (N+1)^{r} V_{N^d,r}(\mu)
=\biggl(1+\frac{1}{N}\biggr)^{r} N^{r}V_{N^d,r}(\mu)\\
\le C\biggl(1+ \int_{\M} d(x,x_0)^{r+\delta}\,d\mu(x)+  \int_{\M} A_{x_0}\bigl({d(x,x_0)}\bigr)^r  \,d\mu(x)\biggr),
\end{multline*}
which proves \eqref{eq:bound VNr 2}.

We now prove \eqref{eq:quantiz noncpt}. Observe that, as shown in  \cite[Proof of Theorem 6.2, Step 5]{GL},
once the asymptotic quantization is proved for compactly supported probability measures,
by the monotone convergence theorem one always has
$$
\underset{N\to\infty}{\liminf}\,N^{r/d} V_{N,r}(\mu)\geq Q_r\bigl([0,1]^d\bigr)\,\biggl(\int_{\M}\hh^{d/(d+r)}\,d\vol\biggr)^{(d+r)/d},
$$
hence one only have to prove the limsup inequality.

For that, one splits the measure $\mu$ as the sum of $\mu_R^1:=\chi_{B_R(x_0)}\mu$
and $\mu_R^2:=\chi_{\M\setminus B_R(x_0)}\mu$, where $R \gg 1$.
Then one applies \cite[Lemma 6.5(a)]{GL} to bound from above $N^{r/d} V_{N,r}(\mu)$
in terms of $N^{r/d} V_{N,r}(\mu_R^1)$ and $N^{r/d} V_{N,r}(\mu_R^2)$, and uses 
the result in the compact case for $N^{r/d} V_{N,r}(\mu_R^1)$,
to obtain that, for any $\e \in (0,1)$
\begin{multline*}
\underset{N\to\infty}{\limsup}\,N^{r/d} V_{N,r}(\mu)\leq (1-\e)^{-r/d}\,Q_r\bigl([0,1]^d\bigr)\,\biggl(\int_{B_{R}(x_0)}\hh^{d/(d+r)}\,d\vol\biggr)^{(d+r)/d}\\
+\mu(\M\setminus B_R(x_0))\,\e^{-r/d}\,\underset{N\to\infty}{\limsup}\,N^{r/d} V_{N,r}\biggl(\frac{1}{\mu(\M\setminus B_R(x_0))}\mu_R^2\biggr).
\end{multline*}
Thanks to \eqref{eq:bound VNr 2}, we can bound the limsup in the right hand side by
$$
\e^{-r/d}\biggl(\mu(\M\setminus B_R(x_0))+ \int_{\M} d(x,x_0)^{r+\delta}\,d\mu_R^2(x)+  \int_{\M} A_{x_0}\bigl({d(x,x_0)}\bigr)^r  \,d\mu_R^2(x)\biggr),
$$
that tends to $0$ as $R\to \infty$ by dominated convergence.
Hence, letting $R\to \infty$ we deduce that 
\begin{align*}
\underset{N\to\infty}{\limsup}\,N^{r/d} V_{N,r}(\mu)&\leq (1-\e)^{-r/d}\,Q_r\bigl([0,1]^d\bigr)\,\lim_{R\to \infty}\biggl(\int_{B_{R}(x_0)}\hh^{d/(d+r)}\,d\vol\biggr)^{(d+r)/d}\\
&=(1-\e)^{-r/d}\,Q_r\bigl([0,1]^d\bigr)\,\biggl(\int_{\M}\hh^{d/(d+r)}\,d\vol\biggr)^{(d+r)/d},
\end{align*}
and the result follows letting $\e \to 0$.

%Applying Lemma \ref{lem:1D} to the first term of the RHS we get the following estimate:
%\begin{align*}
%N^r V_{N^{d},r}(\mu)& \le C' \bigg(1+\int_\br g(\rho) \rho^{r+\delta}\,d\mu_1(\rho) \bigg)+ C'' \int_{0}^\infty  \int_{\S^{d-1}} a(\rho, \vte)^r\,d\mu_\rho(\vte)\,d\mu_1(\rho)\\.
%\end{align*}
%where $g(\rho):=$

\section{Proof of Theorem \ref{thm:counter}}
\label{sec:counter}
We begin by noticing that if
$$
\int_{\H^2} d(x,x_0)^p\,d\mu<\infty
$$
for some $x_0 \in \H^2$, then this holds for any other point: indeed, given $x_1\in\H^2$,
$$
\int_{\H^2} d(x,x_1)^p\,d\mu\leq 2^{p-1} 
\int_{\H^2} \bigl[d(x,x_0)^p+d(x_0,x_1)^p]\,d\mu<\infty.
$$
In particular, it suffices to check the moment condition at only one point.

We fix a point $x_0 \in \H^2$ and we use the exponential map at $x_0$ to identify $\H^2$
with $(\br^2,d^2\rho+\sinh \rho\,d^2\vte)$.
Then, we define the measure 
$$
\mu:=\sum_{k \in \N} e^{-(1+\e)k}\HH^1{\llcorner{\S^1_k}},
$$
where $\HH^1\llcorner{\S^1_R}$ denotes the $1$-dimensional Haudorff measure restricted to the circle around the origin of radius $R$, and $\e>0$ is a constant to be fixed.

We begin by noticing that
\begin{multline*}
\int_{\H^2} d(x,x_0)^p\,d\mu=\sum_{k \in \N}e^{-(1+\e)k}\int_{\S^1_k} \rho^p\,d\HH^1\\
=\sum_{k \in \N}e^{-(1+\e)k}k^p\,2\pi\sinh(k) \approx \sum_{k \in \N}e^{-\e k}k^p <\infty
\end{multline*}
for all $p>0$.

An important ingredient of the proof will be the following estimate on the quantization 
error for the uniform measure on a circle around the origin.

\begin{lem}
\label{lem:circle}
For any $R\geq 1$ and $M \in \mathbb N$ we have
$$
V_{M,r}\bigl(\HH^1\llcorner\S^1_R\bigr) \gtrsim \biggl(\frac{e^R}{2R}-M \biggr)_+R.
$$
\end{lem}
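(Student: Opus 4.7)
The lower bound follows from a covering argument: with only $M$ centers, the balls $B(x_i,\rho)$ cannot cover too much of $\S^1_R$, so a substantial subset of the circle must lie at distance at least $\rho$ from every $x_i$, each such point contributing at least $\rho^r$ to the integrand defining $V_{M,r}$.

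The heart of the proof is the single-ball arc estimate
\[
\HH^1\bigl(B(p,\rho)\cap\S^1_R\bigr) \le C\sinh\rho\qquad\forall\,p\in\H^2,\ \rho>0,
\]
for a universal constant $C$, which I expect to be the main obstacle. The case $p=x_0$ is immediate (the intersection is either empty or the whole $\S^1_R$, and in the latter case $\rho\ge R$). For $p\neq x_0$ with $D:=d(p,x_0)$, I would exploit the rotational symmetry about the geodesic through $x_0$ and $p$: parameterizing $\S^1_R$ by the signed angle $\psi$ with $\psi=0$ at the point of $\S^1_R$ closest to $p$, the hyperbolic law of cosines gives $\cosh d(p,q(\psi)) = \cosh D\cosh R - \sinh D\sinh R\cos\psi$, strictly monotone in $|\psi|\in[0,\pi]$. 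Hence $B(p,\rho)\cap\S^1_R$ is the symmetric arc $\{|\psi|\le\psi_0\}$, and the identity $\sinh a\sinh b = \sinh^2(\rho/2)-\sinh^2(|D-R|/2)$ for $a+b=\rho$, $|a-b|=|D-R|$ gives
\[
\sin^2(\psi_0/2) = \frac{\sinh^2(\rho/2)-\sinh^2(|D-R|/2)}{\sinh D\sinh R}.
\]
Combining $\HH^1(B(p,\rho)\cap\S^1_R)=2\sinh R\,\psi_0\le 2\pi\sinh R\sin(\psi_0/2)$ with a brief case split (for $D\ge R$ one uses $\sinh R/\sinh D\le 1$; for $D<R$ and $\rho<R$ one exploits $\rho\ge R-D$, while for $\rho\ge R$ the trivial bound $2\pi\sinh R\le 2\pi\sinh\rho$ suffices) yields the estimate.

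Given the single-ball bound, subadditivity produces $\HH^1(\{q\in\S^1_R:\min_i d(q,x_i)\le\rho\})\le CM\sinh\rho$, so the complementary set has $\HH^1$-measure at least $2\pi\sinh R-CM\sinh\rho$, and bounding the integrand below by $\rho^r$ on it yields
\[
V_{M,r}(\HH^1\llcorner\S^1_R) \ge \rho^r\bigl(2\pi\sinh R - CM\sinh\rho\bigr)_+.
\]
Finally, I choose $\rho$ so that $\sinh\rho=R/(2C)$; since $R\ge 1$ this forces $\rho$ to be bounded below by a positive absolute constant, so $\rho^r$ is bounded below by a constant depending only on $r$. The bracket becomes $2\pi\sinh R - MR/2 \ge (e^R/2 - MR)_+ = R\,(e^R/(2R)-M)_+$ (using $2\pi\sinh R\ge e^R/2$ for $R\ge 1$), which delivers the stated lower bound.
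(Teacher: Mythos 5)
Your argument is correct, but it takes a genuinely different route from the paper's. The paper's proof is combinatorial: it splits $\S^1_R$ into $2L$ arcs with $L=[e^R/(2R)]$, shows that the even-indexed arcs are pairwise at distance $\geq c$ (a universal constant, by bounding the angular speed of a geodesic that either stays outside $B_{R-1}(0)$, where the metric inflates angular motion by $\sinh(R-1)$, or dips inside, incurring a length $\geq 1$), and then uses the triangle inequality to conclude that each of the $M$ centers can be $c/2$-close to at most one even-indexed arc, leaving $(L-M)_+$ arcs of length $\approx R$ that are $c/2$-far from every center. Your proof is instead a packing argument at the level of $\HH^1$-measure: a single ball of radius $\rho$ meets the circle in an arc of length $\lesssim\sinh\rho$, so $M$ balls remove at most $CM\sinh\rho$, and choosing $\sinh\rho = R/(2C)$ leaves $\gtrsim R(e^R/(2R)-M)_+$ of the circle at distance $\geq\rho\gtrsim 1$ from all centers. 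Both are valid; yours has the virtue of isolating a clean reusable estimate (arc length of a ball section), while the paper's is more elementary and avoids explicit hyperbolic trigonometry.

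One word of caution on your single-ball estimate: the ``brief case split'' for $D<R$, $\rho<R$ is more delicate than your phrasing suggests, because the ratio $\sinh R/\sinh D$ with $D\geq R-\rho$ is not uniformly controlled when $R-\rho$ is small, and you really do need the subtracted term $\sinh^2(|D-R|/2)$ in the numerator to save you. A cleaner way to prove $\HH^1(B(p,\rho)\cap\S^1_R)\leq 2\pi\sinh\rho$ for all $p$, $\rho$ is via the chord of the arc: if $q_1,q_2$ are the two endpoints (at angles $\pm\psi_0$), then $d(q_1,q_2)\leq 2\rho$, while the law of cosines gives $\cosh d(q_1,q_2)=1+2\sinh^2R\sin^2\psi_0$, so $\sinh R\sin\psi_0\leq\sinh\rho$; if $\psi_0\leq\pi/2$ this yields $2\psi_0\sinh R\leq\pi\sin\psi_0\sinh R\leq\pi\sinh\rho$, and if $\psi_0>\pi/2$ the two points of $\S^1_R$ at angles $\pm\pi/2$ lie in $B(p,\rho)$ and are at distance $2R$, forcing $\rho\geq R$ and the trivial bound $2\pi\sinh R\leq 2\pi\sinh\rho$. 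With that estimate in hand, the rest of your argument is correct as written.
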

\begin{proof}
To prove the above estimate, we built a good competitor for the minimization problem. Let us denote with $[\cdot]$ the integer part, and define
$$
L:=\[\frac{e^R}{2R} \].
$$ 
We split $\S^1_R$ in $2L$ arcs  $\Sigma_{i,R}$ of equal length.
%In this way the number of arcs is of order 
%$$
%\frac{\sinh(R)}{R}\approx \frac{e^R}{R}.
%$$
Notice that the following estimate holds: there exists a positive constant $c$, independent of $R,$ such that
\be
\label{eq:arcs}
d(\Sigma_{2j,R},\Sigma_{2j',R})> c \qquad \forall\ j\neq j' \in \{1, \ldots, L \} .
\ee
To show this fact, one argues as follows: consider a geodesic connecting a point $x_1 \in \Sigma_{2j,R}$  to $x_2 \in \Sigma_{2j',R}$.
Because $j \neq j'$ any curve connecting them has to rotate by an angle of order at least $R/e^R$.
Now, two cases arise: either the geodesic $\gamma:[0,1]\to \H^2$ is always contained inside $\br^d \setminus B_{R-1}(0)$, or not.
In the first case we exploit that the metric is always larger than 
$\sinh^2(R-1)d^2\vte$. More precisely, if we denote by $(e_\rho,e_\theta)$ a basis of tangent vectors in polar coordinates
\begin{align*}
d(x,y)&=\int_0^1 \sqrt{g_{\gamma(t)}\bigl(\dot\g(t),\dot\g(t)\bigr)}\,dt \\
&=\int_0^1 \sqrt{\bigl(\dot\g(t)\cdot e_\rho\bigr)^2+\sinh^2(\rho) {\bigl(\dot \gamma(t)\cdot e_\theta\bigr)}}\,dt\\
&\geq \sinh(R-1)\int_0^1|\dot \gamma(t)\cdot e_\theta|\,dt \gtrsim e^{R-1} \frac{R}{e^R} \approx R \geq 1,
\end{align*}
where for the last inequality we used that $\gamma$ has to rotate by an angle of order at least $R/e^R$.
In the second case, to enter inside the ball $B_{R-1}(0)$ the geodesic has to travel a distance at least $1$, so its length is greater that $1$.
This proves the validity of \eqref{eq:arcs}.

We pick now a family of $M$ points $\{ x_\ell\}_{\ell=1}^M.$ Then, by \eqref{eq:arcs} and triangle inequality, we have that for every index $\ell$ there exists at most one index $j(\ell)$ such that
$$
d(x_\ell, \Sigma_{2j,R})> \frac{c}{2} \qquad \forall\ j\neq j(\ell).
$$
Therefore there exists a family of indices $J \in \{1, \ldots, L \}$ of cardinality at least $(L-M)_+$  such that
$$
d(x_\ell, \Sigma_{2j,R})> \frac{c}{2} \qquad \forall j\in J, \quad \forall \ell=1, \ldots, M.
$$
We can now estimate the quantization error:
\begin{align*}
V_{M,r}\bigl(\HH^1\llcorner\S^1_R\bigr)&= \underset{\alpha \subset \H^2 : |\alpha|=M}\min \int_{\S^1_R}\underset{x_\ell \in \alpha}\min\,d(x,x_\ell)^r\,d\HH^1\\
&\ge \underset{\alpha \subset \H^2 : |\alpha|=M}\min \sum_{j=1}^L\int_{\Sigma_{2j, R}}\underset{x_\ell \in \alpha}\min\,d(x,x_\ell)^r\,d\HH^1\\
&\ge\sum_{j\in J}\int_{\Sigma_{2j, R}} \(\frac{c}{2}\)^r\,d\HH^1\gtrsim (L-M)_+R,
\end{align*}
where at the last step we used that $\HH^1(\Sigma_{2j, R})\approx R.$

\end{proof}

We can now conclude the proof.
Indeed, given a set of points $\{x_\ell\}_{1\leq \ell \leq N^2}$ optimal for $\mu$, these points are admissible for the quantization problem of each measure $\HH^1{\llcorner{\S^1_k}}$, therefore
\begin{align*}
V_{N^2,r}(\mu)&=\sum_{k \in \N} e^{-(1+\e)k}\int_{\S^1_k}\min_\ell d(x,x_\ell)^r\,d\HH^1(x)\\
&\geq \sum_{k \in \N} e^{-(1+\e)k}V_{N^2,r}\bigl(\HH^1\llcorner\S^1_k\bigr)\\
&\gtrsim \sum_{k \in \N} e^{-(1+\e)k}\biggl(\frac{e^k}{2k}-N^2 \biggr)_+k,
\end{align*}
where at the last step we used Lemma \ref{lem:circle}.
Noticing that, for $N$ large,
$$
\frac{e^k}{2k}-N^2 \geq \frac14 \frac{e^k}{k} \qquad \text{for $k \geq \log(N^4)$},
$$
we conclude that
\begin{align*}
N^rV_{N^2,r}(\mu)&\gtrsim \frac{N^r}{4} \sum_{k \geq \log(N^4)} e^{-(1+\e)k} \frac{e^k}{k}\,k\\
&=\frac{N^r}{4} \sum_{k \geq \log(N^4)} e^{- \e k}\\
&\gtrsim N^r \int_{\log(N^4)}^\infty e^{-\e t} \,dt \approx \frac{N^r N^{-4 \e}}{\e} \to \infty
\end{align*}
as $N\to \infty$ provided we choose $\e<r/4$.

\bigskip

{\it Acknowledgments:} The author is grateful to Beno\^{i}t Kloeckner for useful comments on this paper.

\end{document}